\newtheorem{thm}{Theorem}
\newtheorem{lem}[thm]{Lemma}
\newtheorem{prp}[thm]{Proposition}
\newtheorem{con}[thm]{Conjecture}
\theoremstyle{definition}
\newtheorem{dfn}[thm]{Definition}
\newtheorem{obs}[thm]{Observation}
\newcommand{\Cay}{\operatorname{Cay}}
\newcommand{\id}{\operatorname{id}}
\newcommand{\gen}[1]{
\ensuremath{\mathord{\raisebox{-.3ex}{\begin{tikzpicture}[line width = .12ex,x=.8ex,y=.8ex]
\phantom{\path[draw] (0,0) circle (1.1);}
\path[draw,line width = .10ex] (0,0) circle (1);
\path[draw,-{Triangle[scale=.7]}] (#1:-.8)--(#1:.8);
\end{tikzpicture}}}}}
\newcommand{\rgen}{\gen{0}}
\newcommand{\ugen}{\gen{90}}
\newcommand{\lgen}{\gen{180}}
\newcommand{\dgen}{\gen{270}}
\newcommand{\agen}{\gen{135}}
\begin{document}

\title{Hamiltonian decompositions of 4-regular Cayley graphs of infinite abelian groups}
\author{Joshua Erde}
\author{Florian Lehner\thanks{Florian Lehner acknowledges the support of the Austrian Science Fund (FWF) through grants J~3850-N32 and P~31889-N35.}}
\affil{\small Graz University of Technology, \\
Institute of Discrete Mathematics, \\
Steyrergasse 30, \\
8010 Graz, Austria.}

\maketitle
\begin{abstract}
A well-known conjecture of Alspach says that every $2k$-regular Cayley graph of an abelian group can be decomposed into Hamiltonian cycles. We consider an analogous question for infinite abelian groups. In this setting one natural analogue of a Hamiltonian cycle is a spanning double-ray. However, a naive generalisation of Alspach's conjecture fails to hold in this setting due to the existence of $2k$-regular Cayley graphs with finite cuts $F$ where $|F|$ and $k$ differ in parity, which necessarily preclude the existence of a decomposition into spanning double-rays.

We show that every $4$-regular Cayley graph of an infinite abelian group all of whose finite cuts are even can be decomposed into spanning double-rays, and so characterise when such decompositions exist. We also characterise when such graphs can be decomposed either into Hamiltonian circles, a more topological generalisation of a Hamiltonian cycle in infinite graphs, or into a Hamiltonian circle and a spanning double-ray.
\end{abstract}

\section{Introduction}
A \emph{Hamiltonian cycle} in a finite graph $G$ is a cycle which includes every vertex of the graph; a \emph{Hamiltonian decomposition} is a partition of the edge set of $G$ into disjoint sets $E=E_1 \uplus E_2 \uplus \cdots \uplus E_r$, 
where each $E_i$ is a Hamiltonian cycle in $G$. One of the earliest results in graph theory is a theorem of Walecki from 1890 stating that every finite complete graph of odd order has a Hamiltonian decomposition (see, for example \cite{A08}). Since then, Hamiltonian decompositions of various classes of graphs have been studied, the survey of Alspach, Bermond and Sotteau \cite{ABS90} gives an overview. 

It is a folklore result that every Cayley graph of an finite abelian group contains a Hamiltonian cycle, hence it is natural to ask for which of them we can find a Hamiltonian decomposition. Sometimes this fails for parity reasons. Indeed, if a graph has a Hamiltonian decomposition then it must be $2k$-regular for some $k$. It is a long-standing conjecture of Alspach that for Cayley graphs of finite abelian groups this is the only thing that can go wrong.

\begin{con}[Alspach \cite{alspach1984research,A85}]\label{c:ab}
If $\Gamma$ is a finite abelian group and $S$ generates $\Gamma$, then the Cayley graph
$\Cay(\Gamma,S)$ has a Hamiltonian decomposition, provided that it is $2k$-regular for some $k$.
\end{con}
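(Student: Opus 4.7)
The plan is to attempt induction on $k$, the half-degree of the Cayley graph. The base case $k=1$ is essentially trivial: if $S=\{s\}$ generates $\Gamma$ then $\Gamma=\langle s\rangle$ is cyclic and $\Cay(\Gamma,S)$ is already a Hamiltonian cycle of length $|\Gamma|$. For the inductive step, with $S=\{s_1,\dots,s_k\}$ of size $k\geq 2$, the most appealing strategy is to produce a single Hamiltonian cycle $H$ in $\Cay(\Gamma,S)$ whose removal leaves a subgraph that is again a Cayley graph $\Cay(\Gamma,S')$ on a generating multiset $S'$ of size $k-1$, and then apply the induction hypothesis.

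The first sub-step, the mere existence of a Hamiltonian cycle, is not the difficulty: any connected Cayley graph of a finite abelian group is known to be Hamiltonian (via the Chen--Quimpo theorem or by direct induction on the rank of $\Gamma$). What one really needs is tight \emph{control} over the cycle. I would begin by writing $\Gamma\cong\mathbb{Z}_{n_1}\times\cdots\times\mathbb{Z}_{n_r}$ in invariant-factor form and building $H$ in a ``lexicographic'' fashion that exhausts the orbits of one of the subgroups $\langle s_i\rangle$ in a predictable pattern. When some generator $s_i$ has order equal to a direct factor of $\Gamma$, this lets one hope to peel off a 2-factor of $H$ whose complement is $\Cay(\Gamma,S\setminus\{s_i\})$ up to a twist.

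The main obstacle, and the reason the conjecture has remained open for four decades, is exactly this second sub-step. The complement in $\Cay(\Gamma,S)$ of an arbitrary Hamiltonian cycle is a $2(k-1)$-regular spanning subgraph, but it is almost never a Cayley graph and generically is not even vertex-transitive, so the induction hypothesis does not apply directly. Partial results (notably the 4-regular case due to Bermond, Favaron and Mah\'eo, and results on circulants and low-rank groups) proceed by intricate case analysis in which both Hamiltonian cycles are built simultaneously rather than one at a time, so that the decomposition is constructed globally.

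Accordingly, my realistic plan would be to first reprove the $k=2$ case in a structured way, isolating exactly which features of the cyclic decomposition of $\Gamma$ are used; then attempt to bootstrap to the case where $\Gamma$ has a ``nice'' direct summand on which one generator acts as a full cycle, reducing to a lower-rank instance; and finally contend with the remaining obstructive cases, where I expect a genuinely new idea -- perhaps an algebraic argument in the group ring $\mathbb{Z}[\Gamma]$, or the infinite limiting picture developed later in this paper -- to be necessary. The hard part, by a wide margin, is coordinating the parity and orbit structure so that $k$ Hamiltonian cycles fit together without leaving a stray 2-factor that is a disjoint union of shorter cycles.
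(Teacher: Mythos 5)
The statement you were asked to prove is Conjecture~\ref{c:ab}, Alspach's conjecture, which the paper does not prove and which remains open: as the authors note, only the cases $k=1$ (trivial) and $k=2$ (Bermond, Favaron and Mah\'eo) are settled, and even $k=3$ is unresolved. Your proposal is, to your credit, candid about this, but it is not a proof. The base case $k=1$ is fine, and your diagnosis of the central obstruction is accurate, but the inductive step is never established. Concretely: the step ``produce a Hamiltonian cycle $H$ whose removal leaves a Cayley graph $\Cay(\Gamma,S')$ with $|S'|=k-1$'' is precisely the part that fails in general, as you yourself observe two paragraphs later --- the complement of a Hamiltonian cycle in $\Cay(\Gamma,S)$ is a $2(k-1)$-regular spanning subgraph that is generically not a Cayley graph, not vertex-transitive, and not even guaranteed to contain a Hamiltonian cycle. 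No construction of a cycle $H$ with the required ``peeling'' property is given, and none is known; the remarks about writing $\Gamma$ in invariant-factor form and exhausting orbits of $\langle s_i\rangle$ lexicographically are a heuristic, not an argument. The closing paragraph is an honest research programme, not a proof of any of its stages.

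What you have written is therefore best read as a correct summary of why the conjecture is hard rather than as a proof attempt with a fixable local gap: the missing content is the entire inductive step for $k\ge 3$, which is exactly the open problem. For comparison, the present paper sidesteps this difficulty entirely by working in the infinite setting with $k=2$, where the analogues of the $k=2$ finite case can be handled by explicit base decompositions (Lemma~\ref{lem:inductionbase}) together with an induction on the parameters $k,l$ of the graphs $G_{k,l}$ (Lemma~\ref{lem:inductionstep}) --- an induction on the \emph{group}, not on the degree, which is available precisely because the degree is fixed at $4$.
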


Not much is known about this conjecture. If $k=1$, then it trivially holds, and in case $k=2$ it was proved by Bermond, Favaron and Meheo \cite{BFM89}. However, even the case $k=3$ is still open, although partial results towards this case were given by Liu \cite{L94} and Westlund \cite{W12}. Liu also showed \cite{L96,L03} that the conjecture holds for any $k$ when $S$ is a minimal generating set.

While the previous results all concerned finite graphs, Hamiltonian cycles have also been considered in infinite graphs. It is not immediate what the correct generalisation of a Hamiltonian cycle to an infinite graph should be. One natural structure to consider is a spanning \emph{double-ray}, an infinite connected graph in which each vertex has degree two, which we will refer to as a \emph{Hamiltonian double-ray}.

Nash-Williams \cite{N59} showed that every connected Cayley graph of a finitely generated infinite abelian group contains a Hamiltonian double-ray, and together with a result of Witte \cite{W90} this then implies that every connected Cayley graph of a finitely generated infinite abelian group with infinite degree (that is, $S$ is infinite) has a decomposition into Hamiltonian double-rays. More recently, the authors and Pitz \cite{ELP20} showed that if $\Gamma$ is a finitely generated abelian group, every element of the finite generating set $S$ has infinite order, and $\Cay(\Gamma,S)$ is one-ended, then it has a decomposition into Hamiltonian double-rays. 

Besides $G$ having to be $2k$-regular for some $k$, there is another parity obstruction to the existence of a decomposition into Hamiltonian double-rays. 
A \emph{cut} is a partition of the vertex set into two parts called the \emph{sides} of the cut; it is called finite if there are finitely many edges connecting the two sides, called \emph{cross edges} or simply \emph{edges} of the cut. If $F$ is a finite cut both of whose sides are infinite, then any Hamiltonian double-ray must contain an odd number of edges of $F$, otherwise it only contains finitely many vertices on one of the sides. So a decomposition into Hamiltonian double-rays can only exist if the number of cross edges of $F$ has the same parity as $k$. 
Note that, for both of the results mentioned above this parity condition  does not play a role since there are no finite cuts with two infinite sides.

In this paper, we restrict our attention to $4$-regular Cayley graphs of infinite abelian groups. For such graphs, we can assume that either $\Gamma = \mathbb{Z}^2$, or $\Gamma = \mathbb{Z}$, or $\Gamma = \mathbb{Z} \oplus \mathbb{Z}_i$ for some $i$, see Proposition \ref{p:classification} and the discussion thereafter. Moreover, for such Cayley graphs the parity condition on finite cuts mentioned above boils down to the following:
\begin{enumerate}[label = (P)]
\item \label{itm:parity} Every finite cut contains an even number of edges.
\end{enumerate}

The existence of a decomposition into Hamiltonian double-rays in the case $\Gamma = \mathbb{Z}^2$ follows from the work of the authors and Pitz in \cite{ELP20}, also see \cite[Proposition 5]{GL18} for a short, direct proof. Bryant, Herke, Maenhaut, and Webb \cite{BHMW18} considered the case $\Gamma = \mathbb{Z}$ and showed among other things that if $S$ is any generating set with $|S|=2$, then $\Cay(\mathbb Z,S)$ has a decomposition into Hamiltonian double-rays if it satisfies \ref{itm:parity}. In this paper we extend this result to groups of the form $\Gamma = \mathbb{Z} \oplus \mathbb{Z}_i$ for some $i$, thus proving the following result.

\begin{thm}
\label{thm:main}
Let $G$ be a connected, $4$-regular Cayley graph of an infinite abelian group which satisfies \ref{itm:parity}, then $G$ has a decomposition into Hamiltonian double-rays.
\end{thm}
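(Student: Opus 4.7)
The plan is to use the classification in Proposition \ref{p:classification} to reduce to the three possible forms of $\Gamma$. The cases $\Gamma = \mathbb{Z}^2$ and $\Gamma = \mathbb{Z}$ are already handled in the literature, respectively by \cite{ELP20,GL18} and \cite{BHMW18}, so the remaining work is to prove the theorem for $\Gamma = \mathbb{Z} \oplus \mathbb{Z}_i$ with $i \geq 2$. This is a two-ended graph with a natural cylindrical structure, which I will exploit.

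For such a group, since $|S \cup S^{-1}| = 4$, the generating multiset $S$ falls into a small number of cases depending on how many of its elements are involutions (either two non-involution generators, or one non-involution together with two involutions, or four involutions). Using automorphisms of $\Gamma$, which induce graph isomorphisms of the Cayley graph and preserve \ref{itm:parity}, I would reduce each of these to a short list of normal forms for $S$, for instance $S = \{(1,0), (p,q)\}$ with $p \in \mathbb{Z}$ and $q \in \mathbb{Z}_i$ lying in canonical ranges in the generic case. For each normal form, \ref{itm:parity} then becomes a concrete arithmetic parity constraint, since in a two-ended Cayley graph every finite cut has size determined up to parity by the $\mathbb{Z}$-projections of the generators and by $i$.

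For each normal form satisfying this arithmetic constraint, I would construct two edge-disjoint spanning double-rays explicitly. The geometric picture is a cylinder of circumference $i$ stretched along $\mathbb{Z}$. When $i$ is even the cycle $\mathbb{Z}_i$ has two complementary perfect matchings, and each matching combined with a suitably chosen subset of the edges from the other generator can be woven into a zig-zag spanning double-ray; when a generator already wraps around the cylinder, an analogous spiralling pattern with an adjusted step size is needed so the two rays fit together and cover all edges. As a building block for the involutional cases, one can often reduce to the generic construction by pairing up the involutional edges along columns.

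The main obstacle I expect is \emph{connectivity}: producing two $2$-regular spanning subgraphs whose edge sets partition $E(G)$ is straightforward, but guaranteeing that each is a single double-ray (rather than a disjoint union of several rays, double-rays or finite cycles) requires the weaving or spiralling patterns to sweep through all residues modulo $i$ in the correct order as one reads along the $\mathbb{Z}$-direction. I expect condition \ref{itm:parity} to be precisely what makes this global synchronization possible in each normal form, so the proof should essentially consist of verifying this claim case by case.
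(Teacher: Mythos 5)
Your reduction is sound and matches the paper's setup: Proposition~\ref{p:classification} does reduce the problem to the square grid (known) and to the graphs $G_{k,l}$, and condition \ref{itm:parity} does translate into a concrete arithmetic constraint (namely $k\equiv l \bmod 2$). Two small remarks on your case analysis: the group $\mathbb Z\oplus\mathbb Z_i$ has at most one involution, so a $4$-regular Cayley graph of it never uses involutions at all (the involutional cases only arise for $\mathbb Z\oplus V_4$, whose unique $4$-regular Cayley graph coincides with $G_{4,0}$ and is absorbed into the generic family); and the $\mathbb Z$ case you outsource to \cite{BHMW18} is in any event subsumed by the $G_{k,l}$ analysis with $\gcd(k,l)=1$.

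The genuine gap is that the entire content of the theorem beyond this reduction --- the construction of the two edge-disjoint Hamiltonian double-rays for every admissible pair $(k,l)$ --- is left as an unexecuted sketch. The family of cases is infinite (it is parametrised by $k$ and $l$, not by a finite list of normal forms), so ``verifying this claim case by case'' cannot close the argument; you need either a uniform construction or an inductive scheme, and you supply neither. You correctly identify connectivity of each $2$-regular spanning part as the main obstacle, but you offer no mechanism to certify it: saying that \ref{itm:parity} should ``make this global synchronization possible'' is exactly the statement to be proved. The paper resolves this with two pieces of machinery absent from your proposal: a lifting lemma (Lemma~\ref{lem:liftcycle}) that passes to the finite quotient by the cyclic group generated by $\agen=\lgen\ugen$ and reads off connectivity of the lift from the group element obtained by summing the generators along a Hamiltonian cycle of the quotient (this is what makes your ``spiralling pattern'' rigorous, and it only furnishes base cases with $|k-l|\le 4$); and an induction step (Lemma~\ref{lem:inductionstep}) that widens the cylinder by two columns while rerouting each double-ray through the new columns, for which the ad hoc ``prevalence'' invariant has to be introduced and maintained precisely so that the rerouting detours are finite. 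Without something playing the role of these two lemmas, your proposal is a plausible plan rather than a proof.
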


Our proof also gives Hamiltonian decompositions for a different notion of infinite Hamiltonian cycles called \emph{Hamiltonian circles}. The notion is based on a topological approach to infinite graph theory, a comprehensive introduction to which can be found in \cite{D11,D10,DS12}. We defer the precise definitions to Section \ref{s:topologicalbackground}, but mention that any Hamiltonian circle meets any finite cut in an even number of edges (see Lemma \ref{l:meeteven}). Thus \ref{itm:parity} is also necessary for a decomposition of a $4$-regular graph into Hamiltonian circles to exist. Once again, it turns out that for $4$-regular Cayley graphs of abelian groups, \ref{itm:parity} is also sufficient.

\begin{thm}
\label{thm:main-circle}
Let $G$ be a connected, $4$-regular Cayley graph of an infinite abelian group which satisfies \ref{itm:parity}, then $G$ has a decomposition into Hamiltonian circles.
\end{thm}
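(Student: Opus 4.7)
The plan is to exploit the classification of $\Gamma$ to split into cases by the number of ends of $G$. By Proposition \ref{p:classification}, $\Gamma$ is isomorphic to $\mathbb{Z}^2$, $\mathbb{Z}$, or $\mathbb{Z} \oplus \mathbb{Z}_i$ for some $i$, so $G$ has exactly one end in the first case and exactly two ends in the remaining cases. In both regimes I want to leverage the construction from Theorem \ref{thm:main} as much as possible, and the work is in translating between spanning double-rays and the topological notion of Hamiltonian circles.

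For the one-ended case, the result is essentially immediate from Theorem \ref{thm:main}. If $\omega$ is the unique end of $G$, then both tails of any Hamiltonian double-ray $R$ converge to $\omega$ in the Freudenthal compactification $|G|$. Consequently $R \cup \{\omega\}$ is a subspace of $|G|$ homeomorphic to $S^1$ that contains every vertex, i.e., a Hamiltonian circle. Therefore the decomposition into Hamiltonian double-rays already yielded by Theorem \ref{thm:main} is simultaneously a decomposition into Hamiltonian circles.

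The two-ended case is more delicate, since a Hamiltonian double-ray with one tail converging to each of the two ends is an arc, not a circle, in $|G|$. Using that any Hamiltonian circle $C$ is closed in $|G|$ and is approached by vertex sequences converging to both ends $\omega_1, \omega_2$, we see that $C$ must contain both ends; removing them from $C$ yields two open arcs each of which, translated to graph terms, is a double-ray with one tail at $\omega_1$ and one at $\omega_2$. Hence a Hamiltonian circle corresponds to a $2$-regular subgraph that is the edge-disjoint union of two double-rays whose vertex sets partition $V(G)$ and each of which has tails at both ends. The plan is to retrace the construction used for Theorem \ref{thm:main} in the cases $\Gamma = \mathbb{Z}$ and $\Gamma = \mathbb{Z} \oplus \mathbb{Z}_i$ and adapt it so that each of the two $2$-factors produced has this paired-double-ray structure instead of being a single spanning double-ray. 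The parity condition \ref{itm:parity} remains exactly the obstruction: each Hamiltonian circle meets every finite cut evenly by Lemma \ref{l:meeteven}, and in the two-ended case each component double-ray meets the separating cut between $\omega_1$ and $\omega_2$ in an odd number of edges, so two such circles use a multiple of four cross edges on that cut, still consistent with \ref{itm:parity}.

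The principal obstacle is therefore to control the \emph{component structure} of the $2$-factors produced. Constructing a single spanning double-ray only requires ensuring one global connection and the correct end behaviour, whereas a Hamiltonian circle's graph part must be exactly two components, each a double-ray, each with tails at both ends. I expect to handle this by a finite local swap along the spanning double-rays already produced by Theorem \ref{thm:main}: replacing the two double-rays at a carefully chosen pair of vertices converts a single spanning double-ray into a disjoint pair of double-rays with the required end behaviour, without affecting the rest of the decomposition or the edge-disjointness. The hard part will be showing that such a swap can always be performed while preserving the spanning and end-behaviour conditions for both circles simultaneously, and in particular that the ad hoc case analysis needed for $\mathbb{Z} \oplus \mathbb{Z}_i$ in Theorem \ref{thm:main} can be refined to this stronger conclusion.
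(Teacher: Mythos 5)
Your one-ended case is correct and is exactly how the paper handles the square grid: Lemma \ref{l:circle}(1) makes precise that a spanning double-ray of a one-ended graph is a Hamiltonian circle, so Theorem \ref{thm:main} suffices there. The two-ended case, however, rests on an idea that cannot work: no finite local swap can convert a spanning double-ray into a Hamiltonian circle of a two-ended graph. A two-ended graph has infinitely many finite cuts separating its two ends; a spanning double-ray meets each of them in an odd number of edges, whereas a Hamiltonian circle meets every finite cut in an even number of edges (Lemma \ref{l:meeteven}) --- a parity clash you yourself record in the proposal. If a candidate circle differs from a spanning double-ray in only finitely many edges, then all but finitely many of those separating cuts contain no modified edge, so the candidate still meets them in an odd number of edges and is not a circle. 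Equivalently, a Hamiltonian circle of a two-ended graph consists of two double-rays and hence has two tails converging to each end, while a spanning double-ray has only one tail per end, and a finite modification cannot change the tail structure. So the plan of reusing the double-ray decomposition from Theorem \ref{thm:main} and then performing a bounded rerouting is not a gap that can be patched; the circle decomposition must be built with the correct end behaviour from the start.

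This is what the paper does: instead of post-processing, it chooses a different Hamiltonian cycle in the finite quotient. In Lemma \ref{lem:liftcycle}, lifting a Hamiltonian cycle of the quotient whose generator-sum equals the generator $a$ of the kernel yields a single spanning double-ray, while lifting one whose generator-sum equals $a^2$ yields exactly two double-rays, each with a tail in both ends, hence a Hamiltonian circle by Lemma \ref{l:circle}(2). Lemma \ref{lem:inductionbase}(3) uses this to get bi-prevalent circle decompositions of the base cases with $k=l+4$, and Lemma \ref{lem:inductionstep}(3) propagates these to all $4$-regular $G_{k,l}$ with $k\equiv l \bmod 2$, with a couple of small cases (such as $G_{2,2}$, decomposed into its horizontal and vertical edges) done by hand. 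Any repair of your approach would require rerouting the double-rays along an infinite set of edges, at which point you would essentially be redoing this construction.
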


Finally, in case \ref{itm:parity} does not hold, we are able to find a `mixed' decomposition into a Hamiltonian double-ray and a Hamiltonian circle.

\begin{thm}
\label{thm:split}
Let $G$ be a connected, $4$-regular Cayley graph of an infinite abelian group which does not satisfy \ref{itm:parity}, then $G$ has a decomposition into a Hamiltonian double-ray and a Hamiltonian circle.
\end{thm}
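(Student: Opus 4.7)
My plan is to reduce to a few concrete cases via Proposition \ref{p:classification}. That classification tells us $\Gamma$ is either $\mathbb{Z}^2$, $\mathbb{Z}$, or $\mathbb{Z} \oplus \mathbb{Z}_i$ for some $i$. When $\Gamma = \mathbb{Z}^2$ the resulting Cayley graph is one-ended, so every finite cut has at least one finite side, and a standard degree-counting argument in a $4$-regular graph shows that a cut with a finite side has an even number of cross edges. Hence \ref{itm:parity} holds automatically in this case, which therefore cannot arise under the hypothesis of the theorem. So we may assume $\Gamma = \mathbb{Z}$ or $\Gamma = \mathbb{Z} \oplus \mathbb{Z}_i$; in both cases $G$ is two-ended, and the failure of \ref{itm:parity} is witnessed by any cut separating the two ends having an odd number of cross edges.

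The core of the strategy is to adapt the decompositions built for Theorems \ref{thm:main} and \ref{thm:main-circle} so that exactly one of the two spanning $2$-regular subgraphs is a Hamiltonian double-ray and the other is a Hamiltonian circle. Parity forces this dichotomy: by Lemma \ref{l:meeteven} every Hamiltonian circle meets each finite cut evenly, so when \ref{itm:parity} fails at most one part of such a decomposition can be a Hamiltonian circle, and the other part, being $2$-regular, spanning, connected, and (as we will arrange) containing exactly one tail towards each of the two ends of $G$, must then be a Hamiltonian double-ray. Consequently the theorem reduces to the combinatorial task of producing \emph{any} spanning $2$-regular decomposition of $G$ whose two halves have the required global topological types.

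For the construction I would handle the two remaining groups in parallel. The case $\Gamma = \mathbb{Z}$ is essentially already contained in the work of Bryant, Herke, Maenhaut and Webb \cite{BHMW18}, so the genuinely new content is $\Gamma = \mathbb{Z} \oplus \mathbb{Z}_i$. Here I would start from the explicit decomposition produced by the proof of Theorem \ref{thm:main-circle} in the \ref{itm:parity}-holding case (which, for these groups, builds two spanning $2$-regular subgraphs via an alternating-strip pattern along the $\mathbb{Z}$-direction and across the $\mathbb{Z}_i$-cosets) and perform a single local modification near one finite cut separating the two ends. Concretely, I would swap one cross edge between the two parts, which changes the parity of each part on the end-separating cut by one. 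The resulting $H_1$ then has two tails heading to opposite ends of $G$ and becomes a Hamiltonian double-ray, while $H_2$ acquires even parity across the bad cut and closes up into a single Hamiltonian circle.

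The main obstacle is verifying that this single local swap leaves $H_1$ connected with exactly two ends and $H_2$ connected in the Freudenthal compactification. A priori, modifying a spanning $2$-regular subgraph at one edge can either fuse or split components, and with infinite components there is genuine flexibility about which happens; in particular one must rule out the $H_2$ produced by the swap being a disjoint union of a Hamiltonian circle and several extra finite cycles, and similarly rule out $H_1$ gaining an isolated double-ray component. The verification therefore requires careful tracking of how the strips of the base construction thread together through the $\mathbb{Z}_i$-cosets, and amounts to a finite case analysis over the finitely many presentations of $\Cay(\mathbb{Z} \oplus \mathbb{Z}_i, S)$ that violate \ref{itm:parity}, which I expect to be the technical heart of the proof.
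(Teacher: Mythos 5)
Your reduction to the two-ended case and the parity dichotomy (one part must meet an end-separating cut oddly, the other evenly) match the paper's framing, but the construction at the heart of your proposal does not work as described. You propose to start from ``the explicit decomposition produced by the proof of Theorem~\ref{thm:main-circle}'' and swap one cross edge. However, Theorem~\ref{thm:main-circle} applies only to graphs satisfying~\ref{itm:parity}; on the graph $G$ you are actually decomposing, which violates~\ref{itm:parity}, no decomposition into two Hamiltonian circles exists (your own parity observation shows this), so there is no valid object to modify. The circle decompositions from Theorem~\ref{thm:main-circle} live on \emph{different} graphs $G_{k,l}$ with $k\equiv l \bmod 2$. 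Moreover, moving a single edge from one class to the other gives its endpoints degree $3$ in one part and degree $1$ in the other, so ``swap one cross edge'' is not an operation on $2$-regular spanning decompositions without further rerouting --- and specifying that rerouting while controlling connectivity is precisely the entire difficulty, which you defer. Two further problems: the citation of Bryant, Herke, Maenhaut and Webb covers decompositions of $\Cay(\mathbb Z,S)$ into Hamiltonian double-rays \emph{under}~\ref{itm:parity}, not the mixed decomposition when~\ref{itm:parity} fails, so the $\mathbb Z$ case is not ``already contained'' there; and the graphs violating~\ref{itm:parity} form the infinite family $G_{k,l}$ with $k\not\equiv l\bmod 2$, so a ``finite case analysis over the finitely many presentations'' cannot close the argument.

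For comparison, the paper proceeds quite differently: it proves the statement for the base cases $G_{5,2}$, $G_{4,1}$, $G_{3,0}$ (Lemma~\ref{lem:inductionbase}) and $G_{2,1}$, and then propagates to all $G_{k,l}$ with $k\not\equiv l \bmod 2$ by an induction that widens the graph by two columns or rows at a time (Lemma~\ref{lem:inductionstep}), using the notion of prevalence to guarantee the extension stays connected. The base decompositions are obtained not by local surgery but by lifting Hamiltonian cycles of the finite quotient of $\Gamma_{k,l}$ by the subgroup generated by $\lgen\ugen$ (Lemma~\ref{lem:liftcycle}): a cycle whose generator-sum equals the generator of that subgroup lifts to a single spanning double-ray, while one whose generator-sum is its square lifts to two double-rays each with tails in both ends, i.e.\ a Hamiltonian circle. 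This quotient-and-lift mechanism is what actually settles the connectivity and end-behaviour questions that your proposal leaves open, so the gap in your argument is not a routine verification but the missing core of the proof.
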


\section{Preliminaries}\label{s:prelim}
\subsection{Topological infinite graph theory}\label{s:topologicalbackground}
A graph $G$ is \emph{locally finite} if every vertex has finite degree. A \emph{ray} in a graph is a one-way infinite path, and an \emph{end} of a locally finite graph is an equivalence class of rays under the relation $R_1 \sim R_2$ if for every finite cut $F$, all but finitely many vertices of $R_1$ and $R_2$ lie on the same side of $F$. If we denote by $\Omega$ the set of ends of a graph $G$ then there is a natural topology on the $1$-complex of $G$ together with $\Omega$ which forms a compact topological space known as the \emph{Freudenthal compactification of $G$} which is normally denoted by $|G|$. A \emph{circle} in $G$ is a subspace of $|G|$ homeomorphic to the circle $S_1$. It can be shown that a circle is uniquely defined by the set of edges contained in it, so by a slight abuse of notation we will also call this set of edges a circle. 

It is worth noting that there is an equivalent, combinatorial definition of a circle, generalising the fact that a cycle is an inclusion minimal element of the cycle space of a graph.

\begin{lem}[\cite{DK04}]\label{l:meeteven}
Let $G$ be a locally finite graph. Then a set of edges $C$ is a circle if and only if $C$ meets every finite cut $F$ of $G$ in an even number of edges, and there is no non-empty $C' \subsetneq C$ with this property.
\end{lem}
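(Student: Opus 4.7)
The plan is to derive the lemma from the more general Diestel--K\"uhn characterisation of the topological cycle space proved in \cite{DK04}: a set of edges $D$ in a locally finite graph meets every finite cut in an even number of edges if and only if $D$ is a disjoint union of edge sets of circles in $|G|$. Once this characterisation is in hand, both directions of the stated lemma follow quickly, so the real work is in setting it up.

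For the forward direction, assume $C$ is a circle. Then $C$ is trivially a disjoint union of circles (just itself), and therefore meets every finite cut evenly. For minimality, I would first record the basic fact that every vertex $v$ of $G$ lying on $C$ has degree exactly $2$ in $C$, which is immediate from the homeomorphism $\varphi : S^1 \to C$ together with local finiteness: a small neighbourhood of $\varphi^{-1}(v)$ in $S^1$ must map to a neighbourhood of $v$ in $|G|$, and at a vertex of finite degree such a neighbourhood can consist of only two arcs. Given a non-empty proper $C' \subsetneq C$, I would then argue that the closures of $C'$ and of $C \setminus C'$ in $|G|$ must share a vertex (and not merely an end), using connectedness of $S^1$ together with a short case analysis at ends; at such a shared vertex $v$ the degree in $C'$ is $1$, so the star at $v$ is a finite cut meeting $C'$ in an odd number of edges.

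For the backward direction, assume $C$ is non-empty, meets every finite cut evenly, and is minimal with this property. By the characterisation, write $C = \bigsqcup_i C_i$ as a disjoint union of circles. If this decomposition had more than one summand, then picking any single $C_i$ would give a non-empty proper subset of $C$ which meets every cut evenly (by the forward direction applied to the circle $C_i$), contradicting minimality; hence $C$ is itself a single circle.

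The real obstacle is establishing the underlying characterisation. The easy half, that each circle meets every cut evenly, is a topological parity count obtained by pulling the cut back along $\varphi : S^1 \to C$ and noting that $S^1$ gets divided into finitely many arcs alternately mapping into the closures of the two sides of the cut, so the number of transitions is even. The hard half, that even-cut implies disjoint union of circles, is the subtle part: every vertex of $G$ has even degree in the edge set (from the star cut), but unlike in a finite graph one cannot simply peel off cycles greedily. The standard route is to use compactness of $|G|$ to take limits of finite even-parity subpaths, producing arcs and double-rays whose ends close up at ends of $G$ into topological circles, and then iterate to exhaust the whole edge set. This is precisely the work carried out in \cite{DK04}, which the present lemma restates in the form needed later.
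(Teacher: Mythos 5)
The paper offers no proof of this lemma; it is quoted verbatim from \cite{DK04}, so there is nothing internal to compare against. Your plan --- deduce the statement from the two main results of that reference, namely the cut criterion for the topological cycle space and the decomposition of cycle space elements into edge-disjoint circuits --- is a legitimate route, and both your backward direction and the ``easy half'' of the forward direction (the parity count of side-changes along $S^1$) are sound.

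The genuine problem is the minimality step of the forward direction. You assert that for a non-empty proper $C' \subsetneq C$ the closures of $C'$ and $C \setminus C'$ ``must share a vertex (and not merely an end)''. This is false. Take a two-ended graph with a Hamiltonian circle $C$ consisting of two vertex-disjoint double-rays $R_1$ and $R_2$, each with a tail in each end (exactly the situation of Lemma \ref{l:circle}(2) of this paper), and set $C' = E(R_1)$: the two closures meet only in the two ends, every vertex has degree $0$ or $2$ in $C'$, and no star cut is met oddly. The lemma still holds in this case --- $C'$ meets any finite cut separating the two ends in an odd number of edges --- but your argument does not produce such a cut, and the ``short case analysis at ends'' you defer is precisely the missing (and not short) content. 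A clean repair that stays inside your framework: if $C'$ met every finite cut evenly, then by the circuit-decomposition theorem $C'$ would contain the edge set $D$ of some circle; the closure of $\bigcup D$ in $|G|$ is then a subspace of $C$ homeomorphic to $S^1$, and it is a proper subset of $C$ because it misses the interior points of the edges in $C\setminus C'$. Since a subspace of $S^1$ homeomorphic to $S^1$ is compact, hence closed, and a proper closed subset of $S^1$ embeds in an arc, this is impossible. With that substitution your derivation goes through.
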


A \emph{Hamiltonian circle} is a circle which meets every vertex of $G$. It is relatively easy to show that every Hamiltonian circle in a one-ended graph is a spanning double-ray. For two-ended graphs it can be shown that every Hamiltonian circle is a disjoint union of two double-rays which together span $G$, each of which contains a ray to both ends of the graph. However, for our purposes we will only need the converse of both of these statements, that such a subgraph is a Hamiltonian circle, which is a simple consequence of Lemma \ref{l:meeteven} and whose proof we provide for completeness.

\begin{lem}\label{l:circle}
\begin{enumerate} 
\item If $G$ is a locally finite, one-ended graph and $C$ is a spanning double-ray, then $C$ is a Hamiltonian circle.
\item If $G$ is a locally finite, two-ended graph and $C$ is a disjoint union of two double-rays which together span $G$, each of which contains a ray to both ends of the graph, then $C$ is a Hamiltonian circle.
\end{enumerate}
\end{lem}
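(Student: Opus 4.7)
The plan is to verify in each case the two conditions of Lemma \ref{l:meeteven}: that $C$ meets every finite cut in an even number of edges, and that no proper non-empty subset of $C$ does the same.

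For the parity-of-cuts condition, I would use an indicator-sequence argument. If $D = (v_i)_{i \in \mathbb{Z}}$ is one of the double-rays comprising $C$ and $F$ is a finite cut with sides $S$ and $V \setminus S$, then $|F \cap E(D)|$ equals the number of indices $i$ at which $\mathbf{1}[v_i \in S]$ changes value. In a one-ended locally finite graph, every finite cut has one finite and one infinite side (otherwise each side would contain a ray by König's lemma, producing two distinct ends). Choosing $S$ to be the finite side, the indicator sequence equals $0$ for all but finitely many $i$, so it has an even number of transitions, giving part (1). In the two-ended case, finite cuts with a finite side are handled identically for each of $D_1, D_2$. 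For a cut whose two infinite sides each contain exactly one end, each $D_j$ has a ray to each end by hypothesis, so its indicator sequence has opposite eventual values as $i \to \pm\infty$ and thus an odd number of transitions; summing over $j = 1, 2$ yields an even total.

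For minimality, I would argue locally via the singleton cuts $F_v = (\{v\}, V \setminus \{v\})$. Suppose $C' \subseteq C$ meets every finite cut evenly. Since $v$ has exactly two $C$-edges (as $C$ is a disjoint union of double-rays), the even-intersection condition with $F_v$ forces $C'$ to contain both or neither of them. Hence $C'$ is a union of connected components of $C$. In part (1), $C$ is connected, forcing $C' \in \{\emptyset, C\}$. In part (2), the only remaining non-trivial option is $C' = D_j$ for some $j$, but $D_j$ has a ray to each end and therefore crosses a cut separating the two ends in an odd number of edges, contradicting the parity assumption on $C'$.

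The main (though mild) technical point is the fact that in a one-ended locally finite graph every finite cut has a finite side, which reduces to the standard observation that an infinite connected locally finite graph contains a ray. With that in hand, both parts reduce to a clean parity count on indicator sequences combined with the local singleton-cut argument for minimality.
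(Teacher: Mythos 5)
Your proposal is correct and follows essentially the same route as the paper: both verify the two conditions of Lemma \ref{l:meeteven}, establishing the even-intersection property by tracking on which side of a finite cut the tails of each double-ray eventually lie, and establishing minimality via singleton cuts (forcing a candidate $C'$ to be a union of components of $C$) plus an end-separating cut to rule out $C' = D_j$ in the two-ended case. Your indicator-sequence phrasing and the explicit justification that finite cuts in a one-ended graph have a finite side are just slightly more formal renderings of the same argument.
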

\begin{proof}
In the case that $G$ is one-ended, every finite cut $F$ has a unique infinite component, which must contain both tails of $C$, and hence $C$ must meet $F$ in an even number edges. For every non-empty strict subset $C'$ of $C$, there is at least one vertex only incident to one edge in $C'$. Thus $C'$ meets the cut with this vertex on one side and all other vertices on the other side in only one edge. It follows from Lemma \ref{l:meeteven} that $C$ is a Hamiltonian circle.

In the case that $G$ is two-ended, every finite cut $F$ has either one, or two infinite components. Let $C_1$ and $C_2$ be the two double-rays forming $C$. If $F$ has one infinite component, then both tails of $C_1$ and $C_2$ are contained in this component, and so both must meet $F$ in an even number of edges. In the second case the two tails of $C_1$ and $C_2$ are contained in different components, and so both must meet $F$ in an odd number of edges. In either case, $C$ meets $F$ in an even number of edges.
If $C' \subsetneq C$ is non-empty, then either there is a vertex incident to only one edge in $C'$, or $C'$ is one of $C_1$ and $C_2$. In the first case we can use the same argument as above to show that $C'$ meets some finite cut in an odd number of edges. Otherwise, let $F$ be a finite cut witnessing the fact that two sub-rays of $C'$ lie in different ends. Then $C'$ must contain infinitely many vertices on both sides of $F$ and thus it contains an odd number of cross edges of $F$. Consequently, by Lemma \ref{l:meeteven}, $C$ is a Hamiltonian circle.
\end{proof}

\subsection{Structure of 4-regular Cayley graphs of abelian groups}
It will be useful to give a classification of the possible graphs that can arise as $4$-regular Cayley graphs of infinite abelian groups.

\begin{dfn}
For any $k\in \mathbb N$ and $l \in \mathbb Z$, the graph $G_{k,l}$ is the graph with
\[
V(G_{k,l}) = \{(m,n)\mid m,n \in \mathbb Z, 0 \leq m < k\},
\]
and whose edge set consists of the following three kinds of edges:
\begin{enumerate}[leftmargin=1.5cm,label={(\alph*)}]
\item\label{e:a} $(m,n)$ to $(m,n+1)$ for $m,n \in \mathbb Z, 0 \leq m < k$,
\item\label{e:b} $(m,n)$ to $(m+1,n)$ for $m,n \in \mathbb Z, 0 \leq m < k-1$, and
\item\label{e:c} $(k-1,n)$ to $(0,n-l)$ for $n \in \mathbb Z$.
\end{enumerate}
\end{dfn}

In all figures throughout this paper, we represent $G_{k,l}$ as follows. We draw every vertex $(m,n)$ at coordinates $(m,n)$ in the plane. Edges of type \ref{e:a} and \ref{e:b} are drawn as straight line segments, edges of type \ref{e:c} are represented by two half edges to the right of $(k-1,n)$ and to the left of $(0,n-l)$. Numbers next to these half edges indicate which of them correspond to the same edge, see for instance Figure \ref{f:Walk-3-1}. We will refer to edges of type \ref{e:a} as \emph{vertical} edges and to edges of types \ref{e:b} and \ref{e:c} as \emph{horizontal} edges.

The aim of this section is to show that any $4$-regular Cayley graph of an abelian group apart from the square grid is in fact of the form $G_{k,l}$ for some $k$ and $l$. To this end, the following observation will be useful.

\begin{obs}
\label{obs:group-graph-iso}
Let $\Gamma, \Delta$ be isomorphic groups. Then every group isomorphism  $\phi\colon \Gamma \to \Delta$ is also a graph isomorphism $\Cay(\Gamma, \{s_1,\dots,s_k\}) \to \Cay(\Delta, \{\phi(s_1),\dots,\phi(s_k)\})$. This also holds for endomorphisms.
\end{obs}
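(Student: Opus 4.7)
The plan is straightforward, since the claim is essentially an unpacking of the definition of a Cayley graph. Writing the group operations additively (the groups in this paper are abelian), an edge $\{g,h\}$ of $\Cay(\Gamma,\{s_1,\dots,s_k\})$ is characterised by the condition $h-g\in\{\pm s_1,\dots,\pm s_k\}$, and analogously for the image Cayley graph $\Cay(\Delta,\{\phi(s_1),\dots,\phi(s_k)\})$. So the whole task reduces to translating this adjacency condition across $\phi$ using the homomorphism identity.

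First I would verify that any group homomorphism $\phi\colon \Gamma\to\Delta$ induces a graph homomorphism between the two Cayley graphs. If $\{g,h\}$ is an edge and, say, $h-g=\pm s_i$, then
\[
\phi(h)-\phi(g)=\phi(h-g)=\pm\phi(s_i),
\]
so $\{\phi(g),\phi(h)\}$ is indeed an edge of $\Cay(\Delta,\{\phi(s_1),\dots,\phi(s_k)\})$. This already disposes of the endomorphism statement, where $\Delta=\Gamma$ and bijectivity is not claimed.

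For the isomorphism case, I would simply apply the same observation to the inverse $\phi^{-1}\colon\Delta\to\Gamma$, which is also a group homomorphism and sends each $\phi(s_i)$ back to $s_i$. Hence edges of the target graph pull back to edges of the source graph, and combined with the vertex bijection given by $\phi$ this shows that $\phi$ is a graph isomorphism. There is no real obstacle here; the observation is just a convenient reformulation of the homomorphism identity $\phi(g+s)=\phi(g)+\phi(s)$, and serves as a bookkeeping tool for the upcoming classification of $4$-regular Cayley graphs of infinite abelian groups.
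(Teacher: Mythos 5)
Your argument is correct and is exactly the routine unpacking of the definition that the paper leaves implicit (the Observation is stated without proof there): the homomorphism identity transports the adjacency condition $h-g\in\{\pm s_i\}$ forward, and applying the same to $\phi^{-1}$ gives the reverse direction for isomorphisms. Your reading of the endomorphism clause as the one-directional (graph homomorphism) statement is also consistent with how the paper actually uses the observation, namely only through the induced isomorphism $\mathbb{Z}^2/\ker\phi\to\Gamma$ in Proposition~\ref{p:classification}.
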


\begin{prp}\label{p:classification}
\label{prp:structure}
If $G$ is a $4$-regular Cayley graph of an infinite abelian group then either $G$ is the square grid, or there exists $k \in \mathbb{N}$ and $l \in \mathbb{Z}$ such that $G \simeq G_{k,l}$.
\end{prp}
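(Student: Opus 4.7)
The plan is to split into cases according to the structure of the symmetric connection set $S \subseteq \Gamma \setminus \{0\}$. Since $G$ is $4$-regular, $|S| = 4$ and $S$ is closed under inversion, so $S$ has one of three forms: two inverse pairs $\{a,-a,b,-b\}$; one pair together with two involutions $\{a,-a,b,c\}$ (with $b \neq c$ and $2b = 2c = 0$); or four involutions. In the last case $\Gamma = \langle S\rangle$ would be a quotient of $(\mathbb Z_2)^4$, hence finite, contradicting the hypothesis.

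In the first case, $\Gamma = \langle a, b\rangle$ is a quotient $\mathbb Z^2 / R$ of the free abelian group on two generators, where $R$ is the subgroup of relations. If $R$ has rank $2$ the quotient is finite, which is excluded; if $R = 0$, then $\{a, b\}$ is a basis of $\mathbb Z^2$ and $G$ is the standard square grid. Otherwise $R = \mathbb Z (p, q)$ for some non-zero $(p,q)$. After replacing $(p,q)$ by $-(p,q)$ if necessary, and, in the degenerate case $p = 0$, applying the coordinate-swap automorphism of $\mathbb Z^2$ (which descends to a Cayley-graph isomorphism via Observation \ref{obs:group-graph-iso}, since it merely permutes the generating set $\{\pm e_1, \pm e_2\}$), I may assume $p \geq 1$. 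The representatives $\{(m, n) : 0 \leq m < p,\ n \in \mathbb Z\}$ of $\mathbb Z^2/\mathbb Z(p,q)$ are then exactly the vertex set of $G_{p,q}$, and the quotient identity $(p, 0) = (0, -q)$ forces the horizontal $e_1$-edge from $(p-1,n)$ to $(0, n - q)$; reading off the remaining edges gives $G \cong G_{p,q}$.

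In the second case, the involutions $b, c$ are distinct and non-zero, and $b + c \neq 0$ (otherwise $c = -b = b$), so $\langle b, c\rangle$ is a Klein four group. Since $\Gamma / \langle b, c\rangle$ is cyclic and infinite, $a$ must have infinite order and $\langle a\rangle \cap \langle b, c\rangle = 0$, yielding $\Gamma \cong \mathbb Z \oplus (\mathbb Z_2)^2$. The Cayley graph then consists of a $4$-cycle on each layer $\{n\} \times (\mathbb Z_2)^2$ formed by the $b$- and $c$-edges, together with $\pm a$-edges connecting corresponding vertices across consecutive layers; this is $C_4 \square \mathbb Z$, which is $G_{4, 0}$. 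The only technical care is required in the rank-$1$ subcase of the first case, where after normalising $(p,q)$ one must read off that the resulting Cayley graph matches the combinatorial definition of $G_{p,q}$ edge-for-edge; everything else is a routine application of the structure theorem for finitely generated abelian groups.
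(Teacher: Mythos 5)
Your proposal is correct and follows essentially the same route as the paper: dispose of the all-involution case, reduce the two-inverse-pair case to quotients $\mathbb{Z}^2/N$ via the structure of subgroups of $\mathbb{Z}^2$ (normalising the generator of a rank-one $N$ and reading off $G_{p,q}$ from the standard representatives), and identify the mixed case as $\mathbb{Z}\oplus V_4$ with Cayley graph $G_{4,0}$. The only difference is presentational (symmetric connection set versus generating set, and slightly more detail in the $\mathbb{Z}\oplus V_4$ case), not mathematical.
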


\begin{proof}
If $G = \Cay(\Gamma,S)$ is a 4-regular Cayley graph of an abelian group, then $2 \leq |S| \leq 4$. If $|S| = 4$, then all of the generators must be involutions and the group is finite. If $|S|=3$, then two of the generators are involutions and hence the group is either finite (in case the third generator has finite order) or $\Gamma = \mathbb Z \oplus \mathbb Z_2 \oplus \mathbb Z_2 = \mathbb Z \oplus V_4$. In the latter case it is easy to verify that $G = G_{4,0}$ is the only possibility if two involutions of $V_4$ appear in the generating set.

So assume that $S = \{a,b\}$. In this case there is a unique endomorphism $\phi \colon \mathbb Z^2 \to \Gamma$ which maps $(1,0)$ to $a$ and $(0,1)$ to $b$. By the isomorphism theorem, $\Gamma \simeq \mathbb Z^2 / \ker \phi$ and by Observation \ref{obs:group-graph-iso} the respective Cayley graphs are isomorphic as well. So it suffices to study Cayley graphs of groups of the form $\Gamma = \mathbb Z^2 / N$ with generators $(1,0)$ and $(0,1)$, where $N$ is any subgroup of $\mathbb Z^2$.

If $N \simeq \mathbb Z^2$ then $\mathbb Z^2 / N$ is finite. If $N = \{(0,0)\}$, then $\Gamma = \mathbb Z^2$ and the Cayley graph is the square grid. The only remaining case is when $N$ is infinite cyclic, i.e.\ there are $k,l \in \mathbb Z$ such that $N = \{n \cdot (k,l) \mid n \in \mathbb Z\}$. We can without loss of generality assume that $k > 0$---clearly $k$ and $l$ cannot simultaneously be $0$ and exchanging their roles leads to an isomorphic situation. Furthermore, if necessary we can replace $(k,l)$ by $(-k,-l)$. Now looking at $\{(m,n)\mid m,n \in \mathbb Z, 0 \leq m < k\}$ as a system of representatives it is straightforward to check that the resulting graph is isomorphic to $G_{k,l}$.
\end{proof}

Note that every $G_{k,l}$ occurs as a Cayley graph, more precisely, it is the Cayley graph of the group $\Gamma_{k,l} :=\mathbb Z \oplus \mathbb Z_{\gcd(k,l)}$ with generators 
\[
\rgen := \left(\frac{l}{\gcd(k,l)},1\right)  \quad \text{ and }  \quad \ugen := \left(-\frac{k}{\gcd(k,l)},1\right).
\]
For $\gcd(k,l) = 1$ we note that $\mathbb Z \oplus \mathbb Z_1 = \mathbb Z$ and consistently with the above, $G_{k,l}$ is the Cayley graph of the group $\Gamma_{k,l} = \mathbb Z$ with generators $\rgen := l$ and $\ugen := -k$.

Using this representation of $G_{k,l}$, vertical edges correspond to the generator $\ugen$ and horizontal edges correspond to the generator $\rgen$. For $0 \leq m < k$ the vertex $(m,n)$ of $G_{k,l}$ corresponds to the group element $\rgen^m\ugen^n$; we point out that if we refer to a vertex of $G_{k,l}$ or group element of $\Gamma_{k,l}$ as a pair $(m,n)$, we always interpret it as $\rgen^m\ugen^n$, and never as $(m \in \mathbb Z,n\in \mathbb Z_{\gcd(k,l)}) \in \mathbb Z \times \mathbb Z_{\gcd(k,l)}$.

Note that we do not need to consider the group $\mathbb Z \oplus V_4$ since $G_{4,0}$ also occurs as a Cayley graph of $\Gamma_{4,0} = \mathbb Z \oplus \mathbb Z_4$. Further note that (by replacing generators by their inverses and swapping their roles) we have that $G_{k,l} \simeq G_{k,-l}$, $G_{k,l} \simeq G_{l,k}$ for $l > 0$, and $G_{k,l} \simeq G_{-l,-k}$ for $l < 0$.

We can specify a walk in $G_{k,l}$ by giving the starting vertex together with a series of generators and their inverses, denoted by $\lgen := \rgen ^{-1}$ and $\dgen = \ugen^{-1}$. To avoid confusion with the group element obtained by multiplication of these generators, we will put the generators defining the walk in square brackets. For a more compact representation we will also represent repeated patterns by exponentiation. For example, the following expressions all define the same walk in $G_{3,1}$, see Figure \ref{f:Walk-3-1}:
\begin{align*}
    P&=(0,0)(1,0)(1,1)(2,1)(2,2)(0,1)(0,2)(0,3)\\
    &=(0,0)[\rgen \ugen \rgen \ugen \rgen \ugen \ugen]\\
    &=(0,0)[\rgen][\ugen\rgen]^2 [\ugen \ugen]\\
    &=(0,0)[\rgen \ugen]^2[ \rgen ][\ugen]^2.
\end{align*}

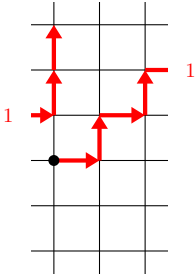
\begin{figure}[ht!]
\centering

\begin{tikzpicture}[scale = .6, hamray/.style={ultra thick,red},hamray3/.style={ultra thick,black!30!green},hamray2/.style={ultra thick,blue}]

\begin{scope}

\path[clip] (-.5,-2.5) rectangle (2.5,3.5);

\foreach \i in {0,...,2}
    \draw (\i,-3)--(\i,6);
\foreach \i in {-2,...,5}
    \draw (-1,\i)--(3,\i);


    \draw [-{Triangle[scale=.7]},hamray] (0,0)--(1,0);
    \draw [-{Triangle[scale=.7]},hamray] (1,0)--(1,1); 
    \draw [-{Triangle[scale=.7]},hamray] (1,1)--(2,1); 
    \draw [-{Triangle[scale=.7]},hamray] (2,1)--(2,2);
    \draw [-{Triangle[scale=.7]},hamray] (2,2)--(3,2); 
    \draw [-{Triangle[scale=.7]},hamray](-1,1)--(0,1) ;
    \draw [-{Triangle[scale=.7]},hamray](0,1)--(0,2);
    \draw [-{Triangle[scale=.7]},hamray] (0,2)--(0,3);
\end{scope}

\node[circle, fill, scale=0.4] at (0,0) {};
\node[red,scale=0.7] at (-1,1) {1};
\node[red,scale=0.7] at (3,2) {1};

\end{tikzpicture}

\caption{The walk $P$ in $G_{3,1}$ starting at the black dot at $(0,0)$. The label indicates where the edge from $(2,2)$ to $(0,1)$ leaves and enters the diagram.}\label{f:Walk-3-1}
\end{figure}

\section{Hamiltonian decompositions of 4-regular Cayley graphs}\label{s:proofs}

A \emph{vertical cut} of $G_{k,l}$ is the orbit of a horizontal edge under the action of the subgroup generated by $\ugen$, or in other words, the set all vertical translates of a horizontal edge. A \emph{horizontal cut} of $G_{k,l}$ is the orbit of a vertical edge under the action of the subgroup generated by $\rgen$. Let $\tilde E$ be a subset of the edges of $G_{k,l}$. We say that $\tilde E$ \emph{prevails} in a vertical (horizontal) cut, if for every edge $e$ in this cut there are $a > 0$ and $b > 0$ such that $\ugen 
^a e$ and $\dgen^b e$ ($\rgen 
^a e$ and $\lgen^b e$) lie in $\tilde E$.
We say that $\tilde E$ is \emph{horizontally (vertically) prevalent} if there is a horizontal (vertical) cut in which $\tilde E$ prevails and \emph{bi-prevalent} if it is both horizontally and vertically prevalent. We say that a decomposition $E_1 \uplus E_2$ \emph{horizontally prevalent} if both $E_1$ and $E_2$ are horizontally prevalent, and similarly for vertically and bi-prevalent.

\begin{lem}
\label{lem:inductionstep}
\begin{enumerate}
    \item If $G_{k,l}$ admits a vertically prevalent or bi-prevalent decomposition into Hamiltonian double-rays, then  so does $G_{k+2,l}$.
    \item If $l > 0$ and $G_{k,l}$ admits a horizontally prevalent or bi-prevalent decomposition into Hamiltonian double-rays, then  so does $G_{k,l+2}$.
    \item Analogous statements hold for decompositions into Hamiltonian circles, and for decompositions into a Hamiltonian double-ray and a Hamiltonian circle.
\end{enumerate}
\end{lem}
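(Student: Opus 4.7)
The plan is to construct the desired decomposition of $G_{k+2,l}$ (respectively $G_{k,l+2}$) from the one given for $G_{k,l}$ by inserting two new columns (respectively rows) at the location of the prevalent cut and rerouting each edge of that cut through the inserted strip. I will describe part (1) in detail. Part (2) follows from it by first applying the isomorphism $G_{k,l} \simeq G_{l,k}$ for $l>0$ noted after Proposition~\ref{p:classification} to swap the roles of rows and columns. Part (3) follows from the very same construction, since by Lemma~\ref{l:circle} a Hamiltonian circle in a one- or two-ended locally finite graph is determined by $2$-regularity together with a local-in-ends condition, both of which survive the purely local rewiring we use.

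For part~(1), let $V$ be a vertical cut of $G_{k,l}$ in which both $E_1$ and $E_2$ prevail. After relabelling, $V = \{e_n : n \in \mathbb Z\}$ consists of the horizontal edges between two consecutive columns $m$ and $m+1$, and I write $c_n \in \{1,2\}$ for the colour of $e_n$, so that prevalence says each of $\{n : c_n = 1\}$ and $\{n : c_n = 2\}$ is unbounded above and below. I use the natural inclusion $\iota\colon V(G_{k,l}) \to V(G_{k+2,l})$ that is the identity on columns $\le m$ and shifts columns $>m$ by $+2$; every edge of $G_{k,l}$ outside $V$ is transported to a correspondingly coloured edge of $G_{k+2,l}$. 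It remains to colour the edges of the inserted strip so that each old $e_n$ is effectively replaced by an edge-disjoint path $P_n$ of colour $c_n$ from $(m,n)$ to $(m+3,n)$.

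I partition $\mathbb Z$ into the maximal runs of constant $c_n$ and rewire block by block. Inside a run $B$ of colour $c_B$, I use the straight $3$-edge horizontal replacement in each row of $B$, giving all three horizontals colour $c_B$ and leaving the interior verticals of the two new columns for colour $3-c_B$. At the boundary between two runs of opposite colour the vertical edge on the boundary row would carry conflicting assignments from the two sides; I resolve this by substituting the straight replacements in the adjacent boundary rows by a pair of $5$-edge zigzag replacements which use the boundary verticals to swap the two colours correctly. A direct per-vertex check shows that the resulting colouring is $2$-regular in each of $E_1$ and $E_2$, and inspecting the colour sequence on the outer horizontals of the strip and the new vertical alternation shows that the new decomposition is again (vertically or bi-) prevalent in the same sense as the input, as required for downstream inductive use.

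The main obstacle, and the only place where prevalence rather than mere non-trivial crossing of $V$ is used, is to verify that each $E_i$ remains a single Hamiltonian double-ray (or the appropriate circle-and-double-ray combination in part~(3)) rather than splitting into several infinite pieces or acquiring a finite cycle. Outside the strip $E_i$ is unchanged. Inside the strip, the $E_i$-contribution consists of the straight and zigzag paths $P_n$ with $c_n = i$, which begin on column $m$ and end on column $m+3$, together with finite ``vertical arcs'' in the two inserted columns coming from runs of the opposite colour, whose endpoints lie on the boundary rows of those runs; the zigzag gadget at each block boundary splices one such arc between two neighbouring $P_n$'s. Because prevalence guarantees that there is no last run of colour $i$ in either direction, this chain of $P_n$'s and arcs extends infinitely on both sides and glues onto the unaltered old $E_i$ to produce a single spanning connected $2$-regular subgraph with no finite cycle, which is the claimed Hamiltonian double-ray.
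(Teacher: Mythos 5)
Your overall strategy---insert two new columns at a prevalent vertical cut and replace each cut edge $e_n$ by a path of the same colour through the new strip---is exactly the right one, and parts (2) and (3) are correctly reduced to part (1). But the specific rewiring gadget you describe does not work. Write $A$ and $B$ for the two inserted columns and consider a colour change at the boundary between row $q$ (colour $1$) and row $q+1$ (colour $2$), where both adjacent runs have length at least $3$. Your straight replacement at the interior row $q-1$ forces the verticals $(A,q-1)(A,q)$ and $(B,q-1)(B,q)$ to have colour $2$ (else $(A,q-1)$ gets three colour-$1$ edges), so your colour-$1$ zigzag at row $q$ must go upwards, through $(A,q)(A,q+1)$, $(A,q+1)(B,q+1)$ and $(B,q+1)(B,q)$. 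The colour-$2$ zigzag at row $q+1$ then cannot use these verticals and must also go upwards, claiming $(A,q+1)(A,q+2)$, $(A,q+2)(B,q+2)$ and $(B,q+2)(B,q+1)$ for colour $2$. But row $q+2$ is interior to its run and receives a straight colour-$2$ replacement, so the vertex $(A,q+2)$ is now incident with three colour-$2$ edges: its left horizontal, its middle horizontal, and the vertical $(A,q+1)(A,q+2)$. So the resulting edge sets are not $2$-regular, and no reorientation of the two $5$-edge zigzags avoids this; a fixed-height gadget at the boundary rows cannot succeed once a run is longer than the gadget.

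The repair is to let the height of the zigzag depend on the colour sequence: if $e_n$ has colour $i$ and $h_n$ is the least $h>0$ with $e_{n+h}$ of colour $i$, replace $e_n$ by the path that goes right, up $h_n-1$ steps in column $A$, right, down $h_n-1$ steps in column $B$, and right again. Prevalence is exactly what makes every $h_n$ finite. For each colour the intervals $[n,n+h_n)$ partition $\mathbb Z$, so these paths are vertex-disjoint, cover every new vertex, and one checks directly that paths of different colours share no edge. This removes your two remaining difficulties at a stroke: there are no leftover ``vertical arcs'' to splice in, and each colour class is obtained from the old one by replacing edges with internally disjoint paths having the same endpoints, so connectedness, $2$-regularity and spanningness (and hence being a Hamiltonian double-ray) are immediate rather than asserted. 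Your connectivity paragraph as written is not a proof: the danger is not that the chain of paths fails to extend infinitely, but that splicing arcs between ``neighbouring'' $P_n$'s could disconnect a colour class or close a finite cycle, and nothing in your argument rules this out. Finally, for the circle case one does need the small additional observation that if $E_i$ has tails in both ends then so does the modified set, which your appeal to Lemma~\ref{l:circle} should make explicit.
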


\begin{proof}
For the proof of the first statement let $E_1 \uplus E_2$ be a vertically prevalent decomposition of $G_{k,l}$ into Hamiltonian double-rays, and let $C$ be a vertical cut in which both $E_1$ and $E_2$ prevail. Without loss of generality, $C$ consists of all edges connecting $(k-1,n)$ to $(0,n-l)$ for $n \in \mathbb Z$; this can always be achieved by applying an appropriate automorphism. We write $e_n$ for the edge connecting $(k-1,n)$ to $(0,n-l)$. If $e_n \in E_1$, then we define $h_n = \min\{h > 0\mid e_{n+h} \in E_1\}$. Similarly, if $e_n \in E_2$, then we define $h_n = \min\{h > 0\mid e_{n+h} \in E_2\}$.

Next note that $G_{k+2,l}$ can be obtained from $G_{k,l}$ by the following procedure: remove all edges in $C$, and for every $j \in \mathbb Z$ add vertices $(k,j)$ and $(k+1,j)$ and the appropriate edges. Using this construction of $G_{k+2,l}$ we transform $E_1$ and $E_2$ into subsets $E_1'$ and $E_2'$ of the edge set of $G_{k+2,l}$ as follows.
The set $E_i'$ consists of $E_i \setminus C$ and the edges of the walks
\[
W_n := (k-1,n)[\rgen][\ugen]^{h_n-1}[\rgen][\dgen]
^{h_n-1}[\rgen]
\]
for every $n$ with $e_n \in E_i$. See Figure \ref{fig:4-2and6-2} for an example in the case of $G_{4,2}$.

\begin{figure}[ht!]
\centering
\begin{minipage}{.5\textwidth}
\centering
\vspace{1,2cm}
\begin{tikzpicture}[scale = .4, hamray/.style={ultra thick,red},hamray2/.style={ultra thick,blue}]

\begin{scope}

\path[clip] (-.5,-6.5) rectangle (3.5,5.5);

\foreach \i in {0,...,3}
    \draw (\i,-7)--(\i,6);
\foreach \i in {-6,...,5}
    \draw (-1,\i)--(9,\i);

\newcommand{\U}{--++(0,1)}
\newcommand{\R}{--++(1,0)}
\newcommand{\D}{--++(0,-1)}


    \draw [hamray]{
        (-1,-5)\R\R\R\U\U\U\R\R
         (-1,-4)\R\R\U\U\U\R\R\R
         (-1,-3)\R\U\U\U\R\R\R\U\U\U\R
          (-1,1)\R\R\R\U\U\U\R\R
         
         };
\end{scope}

\node[red,scale=0.7] at (4,3) {1};
\node[red,scale=0.7] at (-1,1) {1};
\node[red,scale=0.7] at (4,-1) {2};
\node[red,scale=0.7] at (-1,-3) {2};
\node[red,scale=0.7] at (4,-2) {3};
\node[red,scale=0.7] at (-1,-4) {3};

\end{tikzpicture}
\end{minipage}%
\begin{minipage}{.5\textwidth}
\centering
\begin{tikzpicture}[scale = .4, hamray/.style={ultra thick,red},hamray2/.style={ultra thick,blue}]

\begin{scope}

\path[clip] (-.5,-6.5) rectangle (5.5,8.5);

\foreach \i in {0,...,5}
    \draw (\i,-7)--(\i,9);
\foreach \i in {-6,...,8}
    \draw (-1,\i)--(6,\i);

\newcommand{\U}{--++(0,1)}
\newcommand{\R}{--++(1,0)}
\newcommand{\D}{--++(0,-1)}


    \draw [hamray]{
        (-1,-5)\R\R\R\U\U\U\R\R\R\R
         (-1,-4)\R\R\U\U\U\R\R\R\U\U\U\R\D\D\D\R
         (-1,-3)\R\U\U\U\R\R\R\U\U\U\R\R\R
          (-1,1)\R\R\R\U\U\U\R\R\U\U\U\R\D\D\D\R
         
         };
\end{scope}

\node[red,scale=0.7] at (6,3) {1};
\node[red,scale=0.7] at (-1,1) {1};
\node[red,scale=0.7] at (6,-1) {2};
\node[red,scale=0.7] at (-1,-3) {2};
\node[red,scale=0.7] at (6,-2) {3};
\node[red,scale=0.7] at (-1,-4) {3};

\end{tikzpicture}
\end{minipage}%
\caption{Extending a bi-prevalent decomposition of $G_{4,2}$ into Hamiltonian double-rays to a bi-prevalent decomposition of $G_{6,2}$.}\label{fig:4-2and6-2}
\end{figure}
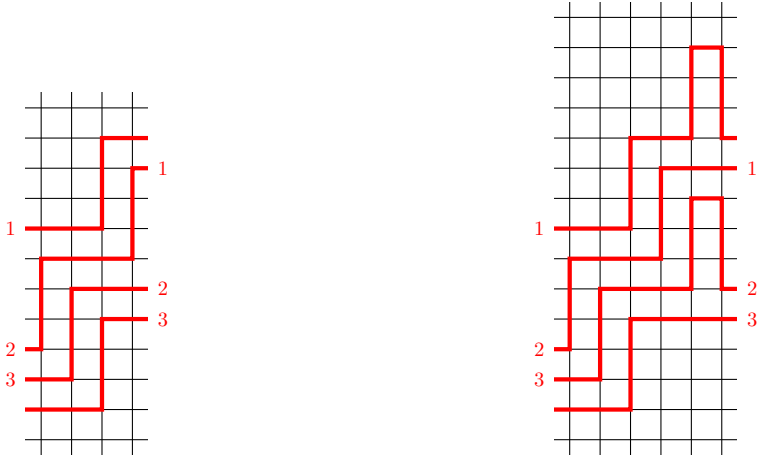

Vertical prevalence of $E_1$ and $E_2$ ensures that $h(e)$ is finite, thus $W_n$ is a finite walk. Note that $W_n$ starts in $(k-1,n)$, ends in $(0,n-l)$, and additionally contains the vertices $(k,j)$ and $(k+1,j)$ for $n \leq j < n+h_n$. In particular, by definition of $n_h$, the paths in $\{W_n\mid e_n \in E_i \cap C\}$ are vertex disjoint and their union covers the vertices $(k,j)$ and $(k+1,j)$ for $j \in \mathbb Z$.

We now show that $E_1'$ and $E_2'$ form the desired decomposition of $G_{k+2,l}$. The graph spanned by $E_i'$ is obtained from the graph spanned by $E_i$ by replacing edges in $C$ by disjoint paths with the same endpoints. Since the graph spanned by $E_i$ was connected and $2$-regular, the same is true for the graph spanned by $E_i'$. It contains all vertices $(i,j)$ for $i < k$ and $j \in \mathbb Z$ since $E_i$ was spanning, and it contains all vertices $(k,j)$ and $(k+1,j)$ due to the above observation.

To see that $E_1'$ and $E_2'$ are disjoint, first note that $E_1 \setminus C$ and $E_2 \setminus C$ are disjoint, so we only need to show that the walks $W_n$ are edge disjoint. 
Take $e_n \in E_1$ and $e_m \in E_2$. If $W_n$ and $W_m$ intersect in a horizontal edge, then either $n = m$ (for the first and last edge), or $n+h_n = m + h_m$ (for the central edge). This is not possible because $e_n$ and $e_{n+h_n}$ are in $E_1$ whereas $e_m$ and $e_{m+h_m}$ are in $E_2$. For vertical edges note that if $W_n$ contains an edge from $(k,j)$ to $(k,j+1)$ or from $(k+1,j+1)$ to $(k+1,j)$, then $e_{j+1} \in E_2$. Similarly, if $W_m$ contains such an edge, then $e_{j+1} \in E_1$. This implies that they cannot contain the same vertical edge, so $W_n$ and $W_m$ must be disjoint.

The decomposition $E_1' \uplus E_2'$ is vertically prevalent since $E_i'$ contains the edge from $(k+1,n)$ to $(0,n-l)$ if and only if $E_i$ contains an edge from $(k-1,n)$ to $(0,n-l)$. If we additionally assume that $E_1 \uplus E_2$ is horizontally prevalent, then so is $E_1' \uplus E_2'$ since any horizontal cut in $G_{k,l}$ is fully contained in a horizontal cut in $G_{k+2,l}$.

This finishes the proof of the first statement. The second statement follows from the fact that for $l > 0$ there is an isomorphism between $G_{k,l}$ and $G_{l,k}$ which swaps horizontal and vertical cuts. The third statement can be proved in a completely analogous fashion (with the additional observation that $E_i$ having tails in different ends of $G_{k,l}$ implies that $E_i'$ has tails in different ends of $G_{k+2,l}$), we leave the details to the reader.
\end{proof}

\begin{lem}
\label{lem:liftcycle}
Let $\Gamma$ be a $2$-ended abelian group\footnote{We note that the number of ends of a Cayley graph does not depend on the generating set chosen, see for example \cite{DM18}
.},
let $S$ be a generating set, and let $\Delta$ be an infinite cyclic subgroup of $\Gamma$ generated by $a$. Let $G$ be the Cayley graph of $\Gamma$ with respect to $S$ and let $H$ be the Cayley graph of $\Gamma/\Delta$ with respect to the generating set $S \Delta$, where we allow multiple edges in case $s_1 \Delta = s_2\Delta$ for $s_1,s_2 \in S$, and let $\pi \colon G \to H$ be the projection map. Let $C$ be a Hamiltonian cycle in $H$ and let $k$ be the sum of the generators used along this cycle.

\begin{enumerate}
    \item If $k = a$, then $\pi^{-1}(C)$ is a Hamiltonian double-ray in $G$.
    \item If $k = a^2$, then $\pi^{-1}(C)$ is a Hamiltonian circle in $G$.
\end{enumerate}
\end{lem}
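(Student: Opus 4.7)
The plan is to lift the Hamiltonian cycle $C$ step by step to a walk in $G$ and then analyse the orbit structure of the resulting subgraph. First I would observe that since $\Gamma$ is a $2$-ended abelian group it has the form $\mathbb Z \oplus F$ with $F$ finite, so the infinite cyclic subgroup $\Delta = \langle a \rangle$ must have finite index; in particular $\Gamma/\Delta$ is finite, $H$ is a finite multigraph, and we may write $C = u_0 u_1 \cdots u_{n-1} u_0$ where the $i$th edge is labelled by some $s_i \in S$, with $k = s_0 + s_1 + \cdots + s_{n-1}$ by assumption equal to $a$ or to $2a$.

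Next I would fix a preimage $\tilde u_0$ of $u_0$ and set $\tilde u_i := \tilde u_0 + s_0 + \cdots + s_{i-1} \in \pi^{-1}(u_i)$. Since each fibre is a coset $\pi^{-1}(u_i) = \{\tilde u_i + ja : j \in \mathbb Z\}$, every vertex of $G$ is uniquely of the form $\tilde u_i + ja$. A direct check would then show that each such vertex is incident to exactly two edges of $\pi^{-1}(C)$, namely the edge of label $s_i$ going to $\tilde u_{i+1} + ja$ and the edge of label $s_{i-1}$ coming from $\tilde u_{i-1} + ja$. Hence $\pi^{-1}(C)$ is $2$-regular and spanning, and following it forward from $\tilde u_0$ traces out the bi-infinite sequence $\ldots, \tilde u_0, \tilde u_1, \ldots, \tilde u_{n-1}, \tilde u_0 + k, \tilde u_1 + k, \ldots$.

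In case (1), where $k = a$, this orbit enumerates each vertex $\tilde u_i + ja$ exactly once, so $\pi^{-1}(C)$ is a connected, $2$-regular, infinite spanning subgraph, i.e.\ a Hamiltonian double-ray. In case (2), where $k = 2a$, the orbit starting at $\tilde u_0$ visits only the vertices $\tilde u_i + (2j)a$, while the orbit starting at $\tilde u_0 + a$ visits the $\tilde u_i + (2j+1)a$; hence $\pi^{-1}(C)$ decomposes into two vertex-disjoint double-rays whose union spans $G$.

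The main obstacle will then be to verify the hypotheses of Lemma~\ref{l:circle}(ii) in case (2): each of the two double-rays must have a tail in each of the two ends of $G$. Using $\Gamma \cong \mathbb Z \oplus F$ and the fact that $\Delta$ has finite index, the generator $a$ projects non-trivially onto the $\mathbb Z$-summand, so translation by $+a$ and by $-a$ pushes vertices towards opposite ends of $G$. Consequently, as $j \to \pm\infty$ along either double-ray, its two tails converge to the two distinct ends, and Lemma~\ref{l:circle}(ii) identifies $\pi^{-1}(C)$ with a Hamiltonian circle, which completes the plan.
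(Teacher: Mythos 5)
Your proposal is correct and follows essentially the same route as the paper: show $\pi^{-1}(C)$ is $2$-regular and spanning, determine its components by tracking the net displacement $k$ per traversal of $C$ (one component when $k=a$, two when $k=a^2$), and in the second case verify via the position of $a$ relative to the two ends that each double-ray has a tail in both ends so that Lemma~\ref{l:circle}(ii) applies. Your version is merely more explicit in its coordinatisation of the fibres; no substantive difference.
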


\begin{proof}
Every vertex  $v$ of $G$ has exactly two incident edges in the preimage (namely the two edges corresponding to the same generators as the edges in $C$ incident to $\pi(v)$). Hence $\pi^{-1}(C)$ is 2-regular.

Since $C$ is spanning in $H$ we know that every component of $\pi^{-1}(C)$ contains elements of all cosets with respect to $\Delta$. Moreover, if $k = a$, then by following the edges corresponding to the same generators as edges along $C$, we see that for any element $v$ of $\Gamma$ the element $va$ lies in the same component of $\pi^{-1}(C)$ as $v$. Hence in this case $\pi^{-1}(C)$ is connected and thus a Hamiltonian double-ray.

If $k = a^2$, then a similar argument shows that $v$ and $va^i$ lie in the same component if and only if $i$ is even, so $\pi^{-1}(C)$ has exactly two components. Each of the components is invariant under the action of $a^2$ and since high positive and negative powers of $a$ converge to different ends of $G$ we conclude that each component contains tails in both ends. Thus $\pi^{-1}(C)$ is a Hamiltonian circle.
\end{proof}

\begin{lem}
\label{lem:inductionbase}
\begin{enumerate}
    \item If $3 \leq k = l+2$, then $G_{k,l}$ has a bi-prevalent decomposition into Hamiltonian double-rays.
    \item If $3 \leq k = l+3$, then $G_{k,l}$ has a bi-prevalent decomposition into one Hamiltonian double-ray and one Hamiltonian circle.
    \item If $3 \leq k = l+4$, then $G_{k,l}$ has a bi-prevalent decomposition into Hamiltonian circles.
\end{enumerate}
\end{lem}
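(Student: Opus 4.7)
The plan is to construct the required decomposition explicitly in each of the three cases. Since $G_{k,l}$ is invariant under the automorphism $\ugen$, it suffices to describe each decomposition by a periodic pattern together with the rule for translating it vertically.

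For Case 1 I would attempt a pair of ``staircase'' double-rays, each an $\ugen$-periodic concatenation of a finite motif. A natural starting ansatz is to iterate a motif such as $[\rgen\ugen]^{k-1}[\rgen]$: since $l=k-2$, the corresponding walk starting at $(0,0)$ closes up into a periodic structure consistent with $\ugen$-translation, covering roughly half the vertices with a clean distribution of horizontal and wrap-around edges. A second, complementary double-ray is built from the remaining edges and vertices, sharing the type-(c) edges with the first in an alternating fashion. For Case 3 the same idea is modified: $l=k-4$ means a single iteration of the motif no longer matches up to an $\ugen$-translate of itself, and the staircase instead closes up only after two iterations into two vertex-disjoint double-rays, each tending to both ends of $G_{k,l}$. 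By the second part of Lemma~\ref{l:circle} this union is a Hamiltonian circle. Case 2 is then a hybrid: one component is closed as in Case 1 (giving a double-ray), the other as in Case 3 (giving a circle), and together they cover all edges.

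As an alternative, one could try to apply Lemma~\ref{lem:liftcycle} uniformly: choose an infinite cyclic subgroup $\Delta=\langle a\rangle\le\Gamma_{k,l}$ such that $\Gamma_{k,l}/\Delta$ is a small finite (multi-)cycle, and then exhibit two edge-disjoint Hamiltonian cycles in the quotient whose $\Gamma$-sums are $a$ (for a double-ray) or $a^2$ (for a circle) as required by the case. This reduces each base case to a finite problem about Hamiltonian decompositions of a circulant multigraph with a prescribed sum, which is elementary to resolve for each fixed small quotient.

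The main obstacle is designing a single pattern (or a single choice of $\Delta$) that works uniformly across all $k\ge 3$ and all the relevant parities of $k$; different parities may require slightly different matching at the boundaries between consecutive periods. Once a pattern is fixed, the remaining verifications---edge-disjointness, vertex coverage, the double-ray/circle structure, and bi-prevalence---all reduce to finite checks within one fundamental domain. In particular, bi-prevalence amounts to verifying that each of $E_1, E_2$ contains a type-(c) wrap-around edge in the period (yielding vertical prevalence) and vertical in-column edges in both the columns $m=0$ and $m=k-1$ (yielding horizontal prevalence), both of which are automatic for any motif that traverses a full vertical slab of $G_{k,l}$.
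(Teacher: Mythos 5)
Your second, ``alternative'' route is in fact the route the paper takes: one sets $\Delta = A = \langle\agen\rangle$ with $\agen = \lgen\ugen$, so that $\Gamma_{k,l}/A$ is cyclic of order $k+l$ and its Cayley graph $H$ is a cycle with doubled edges, and then applies Lemma~\ref{lem:liftcycle} to a Hamiltonian cycle of $H$ and its edge-complement. So the governing idea is present, but as written the proposal has concrete gaps rather than being a proof.

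First, the explicit ansatz in your primary approach does not work: iterating $[\rgen\ugen]^{k-1}[\rgen]$ under $\ugen$-translation cannot give a double-ray, since one period visits $2k-1$ new vertices while its endpoint $\rgen^k\ugen^{k-1}=\ugen^{k-1-l}=\ugen$ (for $l=k-2$) is only one step above its start, so the $\ugen$-translate immediately revisits $(1,1)$. The correct period is translation by $\agen$, and identifying this subgroup is exactly the step you defer (``the main obstacle is \dots a single choice of $\Delta$''). Second, the arithmetic that singles out the offsets $l=k-2,\,k-3,\,k-4$ is never carried out. A Hamiltonian cycle of the doubled cycle $H$ selects one edge from each parallel pair, so the sums of $C$ and of its complement multiply to $\rgen^{k+l}\ugen^{k+l}$; for case 1 one therefore needs $\agen^2=\rgen^{k+l}\ugen^{k+l}$, which holds if and only if $k=l+2$, and similarly $\agen^3$, $\agen^4$ for the other two cases. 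Without this computation the argument neither establishes the claim on the stated ranges nor explains why precisely these three offsets appear. Third, your criterion for bi-prevalence is not the one defined in the paper: a horizontal cut is the $\langle\rgen\rangle$-orbit of a \emph{single} vertical edge, which for $l\neq 0$ is an infinite set winding through all $k$ columns, and prevalence requires edges of that one orbit arbitrarily far in both directions along it---not merely ``vertical edges in the columns $m=0$ and $m=k-1$''. The paper obtains this from the $A$-invariance of the decomposition together with the fact that $\agen^{k}\in\langle\ugen\rangle$ (with a separate remark for the finite horizontal cuts arising when $l=0$); some periodicity argument of this kind is needed and is missing from your sketch.
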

\begin{proof}
Let $A$ be the group generated by $\agen = \lgen\ugen$. Note that the quotient group is a cyclic group generated by $\rgen A = \ugen A$. The Cayley graph $H$ of this quotient group is a cycle where each edge has been replaced by two parallel edges (corresponding to the two different generators. 

If $k = l+2$, pick a Hamiltonian cycle $C$ of $H$ using exactly $k-1$ edges corresponding to the generator $\rgen$. Recall that $(\rgen^k\ugen^l) = \id$, so $C$ contains $l+1 = k-1$ edges corresponding to the generator $\ugen$ and the same is true for the (edge-)complement of $C$. Note that $(\rgen^{k-1}\ugen^{l+1}) = \agen$. Thus Lemma \ref{lem:liftcycle} implies that the preimages $E_1$ of $C$ and $E_2$ of its complement under the natural projection map form a decomposition of $G_{k,l}$ into Hamiltonian double-rays. See the left picture in Figure \ref{fig:4-2and4-0} for an example in $G_{4,2}$.

The argument for the case $k = l+4$ is completely analogous, but with $k-2 = l+2$ edges corresponding to generators $\ugen$ and $\rgen$ respectively; note that $(\rgen^{k-2}\ugen^{l+2}) = \agen^2$. See the right picture in Figure \ref{fig:4-2and4-0} for an example in $G_{4,0}$.

For $k = l+3$ we choose the cycle $C$ with $k-1$ edges corresponding to the generator $\rgen$ and (consequently) $l+1 = k-2$ edges corresponding to the generator $\ugen$. Thus the complement will contain  $k-2$ edges corresponding to the generator $\rgen$ and $l+2$ edges corresponding to the generator $\ugen$. It follows that the preimage $E_1$ of $C$ is a Hamiltonian double-ray and the preimage $E_2$ of its complement is a Hamiltonian circle. See Figure \ref{fig:4-1} for an example in $G_{4,1}$.

It remains to show that the decompositions are bi-prevalent. If we follow the edges of $E_1$ starting at $\id$ then the first edge corresponding to $\ugen$ lies in the same horizontal cut $H$ as the edge from $\id$ to $\ugen$, and the first edge corresponding to $\rgen$ lies in the same vertical cut $K$ as the edge from $\id$ to $\rgen$. Since $C$ contains edges of both types, we know that $E_1$ contains at least one edge in both of these cuts. Similarly, $E_2$ contains edges in both of these cuts.

Note that $\agen^k = \rgen ^k \ugen ^k = \ugen^{k-l}$. Since the decomposition $E_1 \uplus E_2$ is invariant under the action of $A$, this implies that for every edge $e \in E_i \cap K$ the edges $\ugen^{i(k-l)}e$ for $i \in \mathbb Z$ are also in $E_i \cap K$; thus the decomposition is vertically prevalent. If $l =0$, then the horizontal cut $H$ is finite, and the decomposition is horizontally prevalent because both parts intersect with $H$, otherwise the same argument as for $K$ applies.

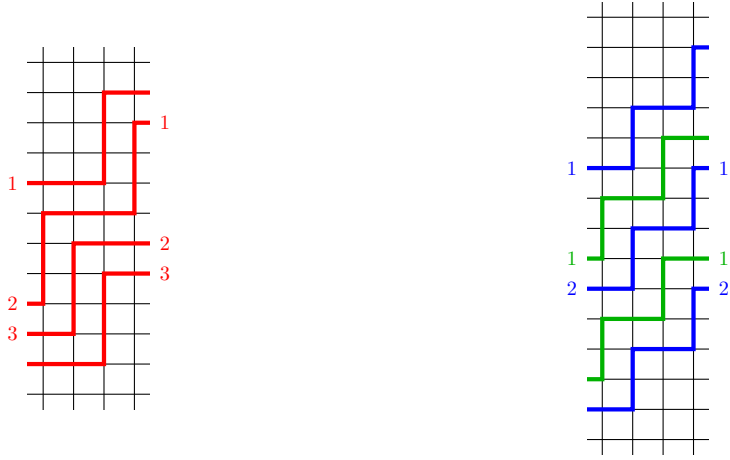
\begin{figure}[ht!]
\centering
\begin{minipage}{.5\textwidth}
\centering
\begin{tikzpicture}[scale = .4, hamray/.style={ultra thick,red},hamray2/.style={ultra thick,blue}]

\begin{scope}

\path[clip] (-.5,-6.5) rectangle (3.5,5.5);

\foreach \i in {0,...,3}
    \draw (\i,-7)--(\i,6);
\foreach \i in {-6,...,5}
    \draw (-1,\i)--(9,\i);

\newcommand{\U}{--++(0,1)}
\newcommand{\R}{--++(1,0)}
\newcommand{\D}{--++(0,-1)}


    \draw [hamray]{
        (-1,-5)\R\R\R\U\U\U\R\R
         (-1,-4)\R\R\U\U\U\R\R\R
         (-1,-3)\R\U\U\U\R\R\R\U\U\U\R
          (-1,1)\R\R\R\U\U\U\R\R
         
         };
\end{scope}

\node[red,scale=0.7] at (4,3) {1};
\node[red,scale=0.7] at (-1,1) {1};
\node[red,scale=0.7] at (4,-1) {2};
\node[red,scale=0.7] at (-1,-3) {2};
\node[red,scale=0.7] at (4,-2) {3};
\node[red,scale=0.7] at (-1,-4) {3};

\end{tikzpicture}
\end{minipage}%
\begin{minipage}{.5\textwidth}
\centering
\begin{tikzpicture}[scale = .4, hamray/.style={ultra thick,red},hamray3/.style={ultra thick,black!30!green},hamray2/.style={ultra thick,blue}]

\begin{scope}

\path[clip] (-.5,-6.5) rectangle (3.5,8.5);

\foreach \i in {0,...,3}
    \draw (\i,-7)--(\i,9);
\foreach \i in {-6,...,8}
    \draw (-1,\i)--(9,\i);

\newcommand{\U}{--++(0,1)}
\newcommand{\R}{--++(1,0)}
\newcommand{\D}{--++(0,-1)}


    \draw [hamray2]{
        (-1,-5)\R\R\U\U\R\R\U\U\R
          (-1,-1)\R\R\U\U\R\R\U\U\R
          (-1,3)\R\R\U\U\R\R\U\U\R
         };
         \draw [hamray3]{
        (-1,-4)\R\U\U\R\R\U\U\R\R
          (-1,-0)\R\U\U\R\R\U\U\R\R
         };
\end{scope}

\node[blue,scale=0.7] at (4,3) {1};
\node[blue,scale=0.7] at (-1,3) {1};
\node[blue,scale=0.7] at (4,-1) {2};
\node[blue,scale=0.7] at (-1,-1) {2};
\node[black!30!green,scale=0.7] at (4,0) {1};
\node[black!30!green,scale=0.7] at (-1,0) {1};

\end{tikzpicture}
\end{minipage}%
\caption{A bi-prevalent double-ray in $G_{4,2}$ whose complement is a bi-prevalent double-ray and a bi-prevalent circle in $G_{4,0}$ whose complement is a bi-prevalent circle.}\label{fig:4-2and4-0}
\end{figure}

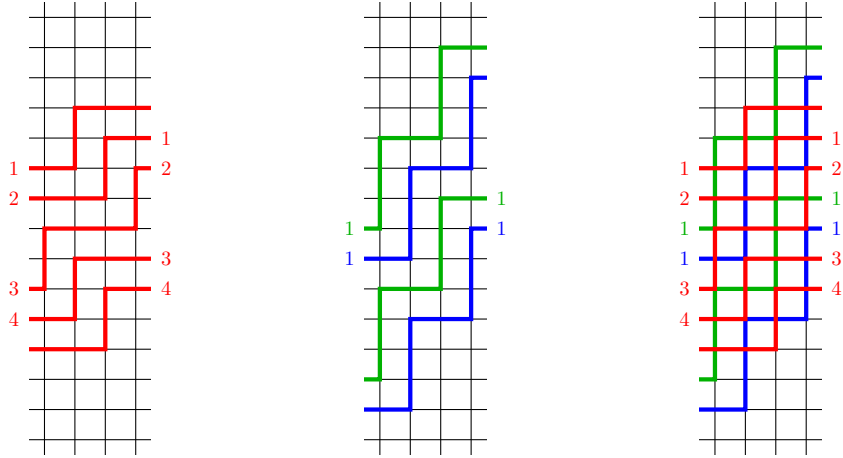
\begin{figure}
\centering
\begin{minipage}{.3\textwidth}
\centering
\begin{tikzpicture}[scale = .4, hamray/.style={ultra thick,red},hamray3/.style={ultra thick,black!30!green},hamray2/.style={ultra thick,blue}]

\begin{scope}

\path[clip] (-.5,-6.5) rectangle (3.5,8.5);

\foreach \i in {0,...,3}
    \draw (\i,-7)--(\i,9);
\foreach \i in {-6,...,8}
    \draw (-1,\i)--(9,\i);

\newcommand{\U}{--++(0,1)}
\newcommand{\R}{--++(1,0)}
\newcommand{\D}{--++(0,-1)}

         \draw [hamray]{
        (-1,-3)\R\R\R\U\U\R\R
        (-1,-2)\R\R\U\U\R\R\R
        (-1,-1)\R\U\U\R\R\R\U\U\R
        (-1,2)\R\R\R\U\U\R\R
        (-1,3)\R\R\U\U\R\R\R
         };
\end{scope}

\node[red,scale=0.7] at (4,4) {1};
\node[red,scale=0.7] at (-1,3) {1};
\node[red,scale=0.7] at (4,3) {2};
\node[red,scale=0.7] at (-1,2) {2};
\node[red,scale=0.7] at (4,0) {3};
\node[red,scale=0.7] at (-1,-1) {3};
\node[red,scale=0.7] at (4,-1) {4};
\node[red,scale=0.7] at (-1,-2) {4};

\end{tikzpicture}
\end{minipage}%
\begin{minipage}{.3\textwidth}
\centering
\begin{tikzpicture}[scale = .4, hamray/.style={ultra thick,red},hamray3/.style={ultra thick,black!30!green},hamray2/.style={ultra thick,blue}]

\begin{scope}

\path[clip] (-.5,-6.5) rectangle (3.5,8.5);

\foreach \i in {0,...,3}
    \draw (\i,-7)--(\i,9);
\foreach \i in {-6,...,8}
    \draw (-1,\i)--(9,\i);

\newcommand{\U}{--++(0,1)}
\newcommand{\R}{--++(1,0)}
\newcommand{\D}{--++(0,-1)}


    \draw [hamray2]{
        (-1,-5)\R\R\U\U\U\R\R\U\U\U\R
        (-1,0)\R\R\U\U\U\R\R\U\U\U\R
         };
        \draw [hamray3]{
        (-1,-4)\R\U\U\U\R\R\U\U\U\R\R
          (-1,1)\R\U\U\U\R\R\U\U\U\R\R
         };
\end{scope}

\node[blue,scale=0.7] at (4,1) {1};
\node[blue,scale=0.7] at (-1,0) {1};
\node[black!30!green,scale=0.7] at (4,2) {1};
\node[black!30!green,scale=0.7] at (-1,1) {1};

\end{tikzpicture}
\end{minipage}%
\begin{minipage}{.3\textwidth}
\centering
\begin{tikzpicture}[scale = .4, hamray/.style={ultra thick,red},hamray3/.style={ultra thick,black!30!green},hamray2/.style={ultra thick,blue}]

\begin{scope}

\path[clip] (-.5,-6.5) rectangle (3.5,8.5);

\foreach \i in {0,...,3}
    \draw (\i,-7)--(\i,9);
\foreach \i in {-6,...,8}
    \draw (-1,\i)--(9,\i);

\newcommand{\U}{--++(0,1)}
\newcommand{\R}{--++(1,0)}
\newcommand{\D}{--++(0,-1)}


    \draw [hamray2]{
        (-1,-5)\R\R\U\U\U\R\R\U\U\U\R
        (-1,0)\R\R\U\U\U\R\R\U\U\U\R
         };
        \draw [hamray3]{
        (-1,-4)\R\U\U\U\R\R\U\U\U\R\R
          (-1,1)\R\U\U\U\R\R\U\U\U\R\R
         };
         \draw [hamray]{
        (-1,-3)\R\R\R\U\U\R\R
        (-1,-2)\R\R\U\U\R\R\R
        (-1,-1)\R\U\U\R\R\R\U\U\R
        (-1,2)\R\R\R\U\U\R\R
        (-1,3)\R\R\U\U\R\R\R
         };
\end{scope}

\node[blue,scale=0.7] at (4,1) {1};
\node[blue,scale=0.7] at (-1,0) {1};
\node[black!30!green,scale=0.7] at (4,2) {1};
\node[black!30!green,scale=0.7] at (-1,1) {1};
\node[red,scale=0.7] at (4,4) {1};
\node[red,scale=0.7] at (-1,3) {1};
\node[red,scale=0.7] at (4,3) {2};
\node[red,scale=0.7] at (-1,2) {2};
\node[red,scale=0.7] at (4,0) {3};
\node[red,scale=0.7] at (-1,-1) {3};
\node[red,scale=0.7] at (4,-1) {4};
\node[red,scale=0.7] at (-1,-2) {4};

\end{tikzpicture}
\end{minipage}%
\caption{A bi-prevalent double-ray in $G_{4,1}$ whose complement is a bi-prevalent circle.}\label{fig:4-1}
\end{figure}

\end{proof}

Note that the restriction $k \geq 3$ in the first condition is necessary since $G_{1,-1}$ and $G_{2,0}$ are not $4$-regular. In the other two cases it is merely required to enable us to apply Lemma \ref{lem:liftcycle}; if $k$ was smaller than $3$ in these cases then the quotient group would become trivial or infinite.

By Proposition \ref{prp:structure} the only $4$-regular Cayley graphs of abelian groups are either the square grid, or of the form $G_{k,l}$. Since the square grid satisfies the conclusion of Theorem \ref{thm:main}, it will suffice to show that every $4$-regular $G_{k,l}$ which satisfies \ref{itm:parity} has a decomposition into Hamiltonian double-rays and a decomposition into Hamiltonian circles. It is easy to check that \ref{itm:parity} is satisfied if and only if $k$ and $l$ have the same parity. Hence the following lemma completes the proof of Theorems \ref{thm:main}, \ref{thm:main-circle}, and \ref{thm:split}.

\begin{lem}
\begin{enumerate} Let $k \in \mathbb{N}$ and $l \in \mathbb{Z}$ be such that $G_{k,l}$ is $4$-regular.
    \item If $k \equiv l \mod 2$, then $G_{k,l}$ has a decomposition into two Hamiltonian double-rays.
    \item If $k \equiv l \mod 2$, then $G_{k,l}$ has a decomposition into two Hamiltonian circles.
    \item If $k \not \equiv l  \mod 2$, then $G_{k,l}$ has a decomposition into a Hamiltonian double-ray and a Hamiltonian circle.
\end{enumerate}
\end{lem}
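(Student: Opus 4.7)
My plan is to reduce an arbitrary four-regular $G_{k,l}$ to a base case of Lemma~\ref{lem:inductionbase} by combining the isomorphisms of Section~\ref{s:prelim} with the inductive construction of Lemma~\ref{lem:inductionstep}. The isomorphisms $G_{k,l} \simeq G_{k,-l}$ and (for $l > 0$) $G_{k,l} \simeq G_{l,k}$ let me assume without loss of generality that $k \geq l \geq 0$, and the three possible parity relations between $k$ and $l$ match the three types of decomposition asserted in the lemma, each of which has its own chain of base cases in Lemma~\ref{lem:inductionbase}.

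For the generic range $l \geq 1$, I pick an appropriate base case $G_{l_0 + c, l_0}$ from Lemma~\ref{lem:inductionbase} (with $c \in \{2,3,4\}$ matching the desired decomposition type and $l_0 \in \{1, 2\}$ matching the parity of $l$) and reach the target $(k, l)$ by iterating Lemma~\ref{lem:inductionstep}(1) to increase $k$ by $2$ and Lemma~\ref{lem:inductionstep}(2) to increase $l$ by $2$. Each intermediate value of $l$ remains at least $1$,
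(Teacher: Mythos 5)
Your overall strategy --- normalise to $k \geq l \geq 0$ via the isomorphisms $G_{k,l} \simeq G_{k,-l}$ and $G_{k,l} \simeq G_{l,k}$, start from bi-prevalent base cases of Lemma~\ref{lem:inductionbase}, and grow $k$ and $l$ in steps of $2$ using the two parts of Lemma~\ref{lem:inductionstep} --- is exactly the paper's. But as written the argument is incomplete, and not only because it breaks off mid-sentence. First, you never treat $l = 0$, and that case is not symmetric to the rest: for the double-ray decomposition, Lemma~\ref{lem:inductionbase} provides no base case with $l=0$ at all (the relevant clause $k = l+2$ would require $G_{2,0}$, which is not $4$-regular, and the $k=l+4$ clause gives $G_{4,0}$ a decomposition into \emph{circles}, not double-rays). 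One therefore has to exhibit an explicit vertically prevalent decomposition of $G_{4,0}$ into Hamiltonian double-rays by hand before inducting on $k$; the paper does this in a figure. Note also that Lemma~\ref{lem:inductionstep}(2) cannot be used to leave the line $l=0$, so these graphs are genuinely a separate strand of the induction.

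Second, the claim that base cases of the form $G_{l_0+c,\,l_0}$ with $l_0 \in \{1,2\}$ reach every target is false, because Lemma~\ref{lem:inductionstep} only ever \emph{increases} $k$ and $l$: any $4$-regular $G_{k,l}$ with $k$ smaller than the base value is unreachable. Concretely, $G_{2,2}$ and $G_{2,1}$ have $k=2<3$ and are covered by no base case of Lemma~\ref{lem:inductionbase}; the paper decomposes them explicitly (for instance into horizontal and vertical edges). Worse, for the circle decomposition your prescription gives base cases $G_{5,1}$ and $G_{6,2}$, which miss $G_{3,1}$, $G_{3,3}$, $G_{4,2}$ and $G_{4,4}$. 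The paper recovers $G_{3,1}$ via the cut-preserving isomorphism with $G_{3,-1}$ (a $k=l+4$ base case with $l_0=-1$, outside your stated range of $l_0$) and supplies a separate explicit bi-prevalent circle decomposition of $G_{4,2}$; only then do $G_{3,3}$ and $G_{4,4}$ follow by induction. Until these sporadic small cases are dealt with by explicit constructions, the proof is not complete.
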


\begin{proof}
By the remark after Proposition \ref{p:classification} it is sufficient to prove the lemma for $k \geq l \geq 0$. Moreover, we can ignore the cases $k=1,l \leq 1$ and $k=2,l=0$ since they do not lead to $4$-regular graphs.

For the first part, note that by Lemma \ref{lem:inductionbase}, there are bi-prevalent decompositions of $G_{4,2}$ and $G_{3,1}$ into Hamiltonian double-rays. Lemma \ref{lem:inductionstep} and induction finish the proof apart from the cases $k=l=2$ and $k > l = 0$. 
For $l = 0$ it is enough to show that $G_{4,0}$ has a vertically prevalent decomposition into Hamiltonian double-rays, once this is done we can induct using the first part of Lemma \ref{lem:inductionstep}.
For $k=l=2$ we explicitly construct a decomposition. Both of these are presented in Figure \ref{fig:4-0and2-2}.

In order to prove the second statement, recall that there is an isomorphism between $G_{k,l}$ and $G_{k,-l}$ and note that this isomorphism preserves horizontal and vertical cuts. Hence by Lemma \ref{lem:inductionbase} the graphs $G_{4,0}$
and $G_{3,1} \simeq G_{3,-1}$ have bi-prevalent decompositions into Hamiltonian circles. Furthermore we give in Figure \ref{fig:4-2} a bi-prevalent decomposition of $G_{4,2}$ into Hamiltonian circles. An inductive application of Lemma \ref{lem:inductionstep} then finishes the proof of the second part apart from the case $k =l=2$. To see that $G_{2,2}$ has a decomposition into Hamiltonian circles, simply consider the decomposition into horizontal and vertical edges.

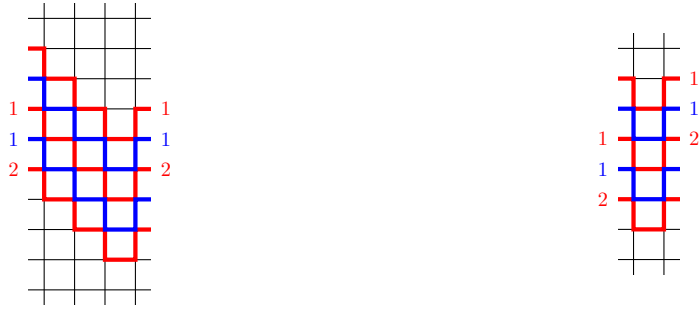
\begin{figure}[ht!]
\centering
\begin{minipage}{.5\textwidth}
\centering
\begin{tikzpicture}[scale = .4, hamray/.style={ultra thick,red},hamray2/.style={ultra thick,blue}]

\begin{scope}

\path[clip] (-.5,-5.5) rectangle (3.5,4.5);

\foreach \i in {0,...,3}
    \draw (\i,-6)--(\i,5);
\foreach \i in {-5,...,4}
    \draw (-1,\i)--(9,\i);

\newcommand{\U}{--++(0,1)}
\newcommand{\R}{--++(1,0)}
\newcommand{\D}{--++(0,-1)}

\foreach \i in {-1,1,3}
{
    \draw [hamray]{
        (-1,\i)\R
        \foreach \i in {1,2}{\D\R}
        \foreach \i in {1}{\D\R\U\R}
    };
}
\foreach \i in {0,2}
{
    \draw [hamray2]{
        (-1,\i)\R
        \foreach \i in {1,2}{\D\R}
        \foreach \i in {1}{\D\R\U\R}
    };
}
\end{scope}

\node[red,scale=0.7] at (4,1) {1};
\node[red,scale=0.7] at (-1,1) {1};
\node[red,scale=0.7] at (4,-1) {2};
\node[red,scale=0.7] at (-1,-1) {2};
\node[blue,scale=0.7] at (4,-0) {1};
\node[blue,scale=0.7] at (-1,-0) {1};

\end{tikzpicture}
\end{minipage}%
\begin{minipage}{.5\textwidth}
\centering
\begin{tikzpicture}[scale = .4, hamray/.style={ultra thick,red},hamray2/.style={ultra thick,blue}]

\begin{scope}

\path[clip] (-.5,-3.5) rectangle (1.5,4.5);

\foreach \i in {0,...,1}
    \draw (\i,-4)--(\i,5);
\foreach \i in {-3,...,4}
    \draw (-1,\i)--(4,\i);

\newcommand{\U}{--++(0,1)}
\newcommand{\R}{--++(1,0)}
\newcommand{\D}{--++(0,-1)}

\foreach \i in {-1,1,3}
{
    \draw [hamray]{
        (-1,\i)\R\D\R\U\R
    };
}
\foreach \i in {0,2}
{
    \draw [hamray2]{
        (-1,\i)\R\D\R\U\R
    };
}
\end{scope}

\node[red,scale=0.7] at (2,3) {1};
\node[red,scale=0.7] at (-1,1) {1};
\node[red,scale=0.7] at (2,1) {2};
\node[red,scale=0.7] at (-1,-1) {2};
\node[blue,scale=0.7] at (2,2) {1};
\node[blue,scale=0.7] at (-1,-0) {1};

\end{tikzpicture}
\end{minipage}%
\caption{A bi-prevalent decomposition of $G_{4,0}$ and a vertically prevalent decomposition of $G_{2,2}$ into Hamiltonian double-rays.}\label{fig:4-0and2-2}
\end{figure}

\begin{figure}
\centering
\begin{tikzpicture}[scale = .4, hamray/.style={ultra thick,red},hamray3/.style={ultra thick,black!30!green},hamray2/.style={ultra thick,blue}]

\begin{scope}

\path[clip] (-.5,-5.5) rectangle (3.5,6.5);

\foreach \i in {0,...,3}
    \draw (\i,-6)--(\i,7);
\foreach \i in {-5,...,6}
    \draw (-1,\i)--(7,\i);

\newcommand{\U}{--++(0,1)}
\newcommand{\R}{--++(1,0)}
\newcommand{\D}{--++(0,-1)}


    \draw [hamray2]{
        (-1,5)\R\D\R\D\R\D\R\U\R
        (-1,1)\R\D\R\D\R\D\R\U\R
         };
        \draw [hamray3]{
        (-1,3)\R\D\R\D\R\D\R\U\R
          (-1,-1)\R\D\R\D\R\D\R\U\R
         };
\end{scope}

\node[blue,scale=0.7] at (4,3) {1};
\node[blue,scale=0.7] at (-1,1) {1};
\node[black!30!green,scale=0.7] at (4,1) {1};
\node[black!30!green,scale=0.7] at (-1,-1) {1};

\end{tikzpicture}
\caption{A bi-prevalent decomposition of $G_{4,2}$ into Hamiltonian circles (note that the complement of this circle is a vertical translation of it).}
\label{fig:4-2}
\end{figure}
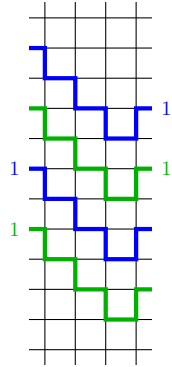

For the proof of the third part note that by Lemma \ref{lem:inductionbase} the graphs $G_{5,2}$, $G_{4,1}$, and $G_{3,0}$ have bi-prevalent decompositions into a Hamiltonian double-ray and a Hamiltonian circle. Inductive application of Lemma \ref{lem:inductionstep} finishes the proof apart from the case $k = 2, l=1$. However, the graph $G_{2,1}$ has a decomposition into a Hamiltonian double-ray and a Hamiltonian circle as well; again consider the decomposition into vertical and horizontal edges.
\end{proof}

\section{A generalisation of Alspach's conjecture}
The conditions in Alspach's conjecture arise quite naturally; since every Hamiltonian cycle must meet every cut of a graph in an even number of edges, for a Hamiltonian decomposition to exist each cut must be even, and this is equivalent in a finite graph to insisting that each vertex has even degree. 

If we consider Hamiltonian circles, which again meet every finite cut of an infinite graph in an even number of edges, then clearly \ref{itm:parity} is again necessary for a decomposition into Hamiltonian circles to exist, and a natural generalisation of Alspach's conjecture would be that \ref{itm:parity} is also sufficient. Theorem \ref{thm:main-circle} shows that this is true for $4$-regular Cayley graphs.

A Hamiltonian double-ray, however, meets a finite cut of an infinite graph an even number of times if the cut has one infinite component graph, and an odd number of times if it has one infinite component. Hence in order for a decomposition into $k$ Hamiltonian double-rays to exist the number of edges in every finite cut with one infinite component must be even, and the number of edges in every finite cut with two infinite components must have the same parity as $k$. A simple double counting argument shows that the first condition will always hold if $G$ is $2k$-regular, and so perhaps a natural generalisation of Alspach's conjecture for double-rays would be that, together with $2k$-regularity, this second condition is also sufficient. Again Theorem \ref{thm:main} shows that this is true for $4$-regular Cayley graphs.

In light of Theorem \ref{thm:split},
an even more ambitious conjecture,  would be the following. Let 
\begin{enumerate}[label = (Q$i$)]
\item \label{itm:par} Every finite cut $F$ with two infinite components satisfies $|F| \equiv i \mod 2$.
\end{enumerate}

\begin{con}
Let $G$ be a $2k$-regular Cayley graph of an abelian group. If $G$ satisfies \ref{itm:par} then $G$ has a decomposition into $i$ many Hamiltonian double-rays and $k-i$ many Hamiltonian circles.
\end{con}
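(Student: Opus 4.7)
The plan is to split the proof by the number of ends of $G = \Cay(\Gamma, S)$ and, in each case, to generalise the lifting construction of Lemma~\ref{lem:liftcycle}. By the structure theorem for finitely generated abelian groups, $\Gamma \cong \mathbb Z^r \oplus F$ for some $r \geq 0$ and finite group $F$, and the number of ends of $G$ is $0$, $2$, or $1$ according as $r = 0$, $r = 1$, or $r \geq 2$. The case $r = 0$ is precisely Alspach's Conjecture~\ref{c:ab}, so any attempted proof must at least take it as a hypothesis.

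For the two-ended case ($r = 1$) I would proceed as in Lemma~\ref{lem:liftcycle}: fix an infinite cyclic subgroup $\Delta = \langle a \rangle$ of finite index in $\Gamma$ and consider the finite $2k$-regular quotient multigraph $H = \Cay(\Gamma/\Delta, S\Delta)$. Granting Alspach's conjecture, choose a decomposition $H_1 \uplus \cdots \uplus H_k$ of $H$ into Hamiltonian cycles; each $H_j$ has a total displacement $d_j = a^{n_j} \in \Delta$, and $\pi^{-1}(H_j)$ splits into $|n_j|$ double-rays whose tails reach both ends of $G$. Thus $n_j = \pm 1$ yields a Hamiltonian double-ray and $n_j = \pm 2$ yields a Hamiltonian circle (generalising Lemma~\ref{lem:liftcycle}). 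Since each such double-ray meets any cut $F$ separating the two ends in an odd number of edges, a direct counting argument gives $|F| \equiv \sum_j n_j \pmod 2$, so hypothesis \ref{itm:par} forces $\sum_j n_j \equiv i \pmod 2$. The goal is then to adjust the Alspach decomposition, via local cycle-exchange moves on $H$ that redistribute displacement, until exactly $i$ of the $n_j$ have absolute value $1$ and the remaining $k - i$ have absolute value $2$.

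For the one-ended case ($r \geq 2$) the hypothesis \ref{itm:par} is vacuous, so the conjecture predicts a mixed decomposition for every $0 \leq i \leq k$. Here I would proceed by induction on $r$: pick a primitive element $a \in \Gamma$ so that $\Gamma/\langle a \rangle$ has one fewer $\mathbb Z$-summand, obtain by induction a suitable decomposition of the quotient Cayley graph into double-rays and circles, and lift. A Hamiltonian double-ray in the quotient with displacement $a$ lifts to a Hamiltonian double-ray in $G$, while a Hamiltonian circle in the quotient lifts to a Hamiltonian circle in $G$ whose double-ray components glue up along $\langle a \rangle$; the mixing between the two types is controlled by adjusting the lifted object along $\langle a \rangle$ in the spirit of the inductive step of Lemma~\ref{lem:inductionstep}.

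The principal obstacle is twofold. First, Alspach's conjecture is open for $k \geq 3$ and is used here as a black box. Second, even granting Alspach, the argument requires a \emph{refined} Alspach-type statement: that a Hamiltonian decomposition of a finite abelian Cayley graph can be chosen with a prescribed parity profile of displacements along any designated infinite cyclic subgroup. This refined statement is strictly stronger than the original conjecture, and the explicit constructions used in the paper's $k = 2$ case (Lemmas~\ref{lem:inductionbase} and~\ref{lem:inductionstep}) do not obviously scale to arbitrary $k$. Overcoming this, I expect, is the main technical barrier.
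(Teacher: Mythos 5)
The statement you are asked to prove is the concluding \emph{conjecture} of the paper: the authors offer no proof of it, and prove it only in the case $k=2$ (Theorems \ref{thm:main}, \ref{thm:main-circle} and \ref{thm:split}). Your proposal is accordingly a research programme rather than a proof, and you are candid about this; the question is whether the gaps you leave are plausibly bridgeable. Your two-ended skeleton is the natural generalisation of the paper's method: pass to the finite quotient $H$ by an infinite cyclic $\Delta=\langle a\rangle$, decompose $H$ into Hamiltonian cycles, and classify the lifts by total displacement $a^{n_j}$ exactly as in Lemma \ref{lem:liftcycle}; your parity computation $|F|\equiv\sum_j n_j \pmod 2$ is correct, since each of the $|n_j|$ double-ray components of $\pi^{-1}(H_j)$ crosses an end-separating cut an odd number of times. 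But everything then hinges on the step you describe as ``adjust the Alspach decomposition via local cycle-exchange moves until exactly $i$ of the $n_j$ have absolute value $1$ and the rest absolute value $2$''. This is not a minor technicality: a lift with $n_j=0$ is a disjoint union of infinitely many finite cycles, and a lift with $|n_j|\ge 3$ is a union of $|n_j|\ge 3$ double-rays, neither of which is a Hamiltonian double-ray or a Hamiltonian circle, so without this control the construction produces nothing. No mechanism for such exchanges is given, and the statement actually needed---that a finite abelian Cayley graph admits a Hamiltonian decomposition with a \emph{prescribed displacement profile} along a designated subgroup---is, as you note yourself, strictly stronger than Conjecture \ref{c:ab}, which is already open for $k\ge 3$.

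The one-ended case is in worse shape. The lifting device of Lemma \ref{lem:liftcycle} depends essentially on the quotient cycle being \emph{finite}, so that its total displacement is a well-defined element of $\Delta$ determining the number of components of the preimage. For $r\ge 2$ your quotient $\Gamma/\langle a\rangle$ is infinite, a Hamiltonian double-ray or circle in it has no well-defined total displacement, and there is no analogue of Lemma \ref{lem:liftcycle} telling you when its preimage is connected or splits into exactly two pieces; moreover some generators may become trivial or coincide in the quotient, so the quotient Cayley graph need not be a $2k$-regular simple Cayley graph to which an inductive hypothesis applies. (The paper's own handling of the one-ended square-grid case defers to \cite{ELP20}, which uses quite different techniques.) In short, your proposal correctly locates where the difficulty lies, and its two-ended outline is consistent with how the paper proves the $k=2$ case, but it neither proves the conjecture nor reduces it to Alspach's conjecture alone; the refined displacement-profile statement and the one-ended induction are genuine missing ingredients.
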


\bibliographystyle{abbrv}
\bibliography{cycles}

\begin{thebibliography}{10}

\bibitem{alspach1984research}
B.~Alspach.
\newblock Research problems.
\newblock {\em Discrete Mathematics}, 50:115, 1984.

\bibitem{A85}
B.~Alspach.
\newblock Unsolved problem 4.5.
\newblock {\em Annals of Discrete Mathematics}, 27:464, 1985.

\bibitem{A08}
B.~Alspach.
\newblock The wonderful walecki construction.
\newblock {\em Bull. Inst. Combin. Appl}, 52:7--20, 2008.

\bibitem{ABS90}
B.~Alspach, J.~C. Bermond, and D.~Sotteau.
\newblock Decomposition into cycles {I}: {H}amilton decompositions.
\newblock In {\em Cycles and rays}, pages 9--18. Springer, 1990.

\bibitem{BFM89}
J.~C. Bermond, O.~Favaron, and M.~Maheo.
\newblock {H}amiltonian decomposition of {C}ayley graphs of degree 4.
\newblock {\em Journal of Combinatorial Theory, Series B}, 46(2):142--153,
  1989.

\bibitem{BHMW18}
D.~Bryant, S.~Herke, B.~Maenhaut, and B.~S. Webb.
\newblock On {H}amilton decompositions of infinite circulant graphs.
\newblock {\em Journal of Graph Theory}, 88(3):434--448, 2018.

\bibitem{D10}
R.~Diestel.
\newblock Locally finite graphs with ends: A topological approach, ii.
  applications.
\newblock {\em Discrete mathematics}, 310(20):2750--2765, 2010.

\bibitem{D11}
R.~Diestel.
\newblock Locally finite graphs with ends: A topological approach, i. basic
  theory.
\newblock {\em Discrete mathematics}, 311(15):1423--1447, 2011.

\bibitem{DK04}
R.~Diestel and D.~K{\"u}hn.
\newblock On infinite cycles {I}.
\newblock {\em Combinatorica}, 24(1):69--89, 2004.

\bibitem{DS12}
R.~Diestel and P.~Spr\"{u}ssel.
\newblock Locally finite graphs with ends: A topological approach, iii.
  fundamental group and homology.
\newblock {\em Discrete mathematics}, 312(1):21--29, 2012.

\bibitem{DM18}
C.~Dru{\c{t}}u and M.~Kapovich.
\newblock {\em Geometric group theory}, volume~63.
\newblock American Mathematical Soc., 2018.

\bibitem{ELP20}
J.~Erde, F.~Lehner, and M.~Pitz.
\newblock {H}amilton decompositions of one-ended {C}ayley graphs.
\newblock {\em Journal of Combinatorial Theory, Series B}, 140:171--191, 2020.

\bibitem{GL18}
A.~Georgakopoulos and F.~Lehner.
\newblock Invariant spanning double rays in amenable groups.
\newblock {\em arXiv preprint arXiv:1810.08116}, 2018.

\bibitem{L94}
J.~Liu.
\newblock {H}amiltonian decompositions of {C}ayley graphs on abelian groups.
\newblock {\em Discrete Mathematics}, 131(1-3):163--171, 1994.

\bibitem{L96}
J.~Liu.
\newblock {H}amiltonian decompositions of {C}ayley graphs on abelian groups of
  odd order.
\newblock {\em Journal of Combinatorial Theory, Series B}, 66(1):75--86, 1996.

\bibitem{L03}
J.~Liu.
\newblock {H}amiltonian decompositions of {C}ayley graphs on abelian groups of
  even order.
\newblock {\em Journal of Combinatorial Theory, Series B}, 88(2):305--321,
  2003.

\bibitem{N59}
C.~Nash-Williams.
\newblock Abelian groups, graphs and generalized knights.
\newblock {\em Mathematical Proceedings of the Cambridge Philosophical
  Society}, 55(3):232--238, 1959.

\bibitem{W12}
E.~Westlund.
\newblock {H}amilton decompositions of certain 6-regular {C}ayley graphs on
  abelian groups with a cyclic subgroup of index two.
\newblock {\em Discrete Mathematics}, 312(22):3228--3235, 2012.

\bibitem{W90}
D.~Witte.
\newblock {H}amilton-decomposable graphs and digraphs of infinite valence.
\newblock {\em Discrete mathematics}, 84(1):87--100, 1990.

\end{thebibliography}

\end{document}